\theoremstyle{plain}
\newtheorem{theorem}{Theorem}[]
\newtheorem{lemma}[theorem]{Lemma}
\newtheorem{corollary}[theorem]{Corollary}
\theoremstyle{definition}
\numberwithin{equation}{section}
\def\a{\alpha}
\def\b{\beta}
\def\d{\delta}
\def\ka{\kappa}
\def\LL{\Lambda}
\def\om{\omega}
\def\ph{\varphi}
\def\Sig{\Sigma}
\def\th{\theta}
\def\ab#1{|#1|}
\def\C{\mathscr{C}}
\def\F{\mathscr{F}}
\def\G{\mathscr{G}}
\def\M{\mathscr{M}}
\def\N{\mathscr{N}}
\def\bo{\mathop{\scaleto{\Box}{8pt}}}       
\def\di{\mathop{\scaleto{\Diamond}{11pt}}}     
\def\sub{\subseteq}
\def\<{\langle}  
\def\>{\rangle}
\title{Completeness of Pledger's modal logics of one-sorted projective and elliptic planes}
\author{Robert Goldblatt\\
	    School of Mathematics and Statistics\\
	    Victoria University of Wellington }
\date{}                                           
\begin{document}
\maketitle

\begin{abstract}
Ken Pledger devised a one-sorted approach to the incidence relation of plane geometries, using structures that also support models of propositional modal logic. He introduced a modal system 12g that is valid in one-sorted projective planes, proved that it has finitely many non-equivalent modalities, and identified all possible modality patterns of its extensions. One of these extensions 8f is valid in elliptic planes. These results were presented in his  1980 doctoral dissertation \cite{pled:some81}, which has been reprinted in the {\em Australasian Journal of Logic}, 
   vol.~18, no.~4.
\url{https://doi.org/10.26686/ajl.v18i4.6831}

Here we show that 12g and 8f are strongly complete for validity in their intended one-sorted geometrical interpretations, and have the finite model property. The proofs apply standard technology of modal logic (canonical models, filtrations) together with a step-by-step procedure introduced by Yde Venema for constructing two-sorted projective planes.

\end{abstract}

\paragraph{AMS 2020 classification}
03B45, 51A05, 15M10

\paragraph{Keywords}
modal logic, projective plane, elliptic plane, strongly complete, finite model property, filtration

\thispagestyle{firststyle} 


\section{Pledger's dissertation}   \label{sec:diss}

Kenneth Ernest Pledger (1938--2016) was a mathematician who taught at the Victoria University of Wellington for 52 years. 
He was primarily interested in geometry and the history of mathematics, but participated regularly in logic seminars conducted jointly by mathematicians and philosophers at VUW. This led him to make some contributions to modal logic, including the articles \cite{pled:mod72,pled:some75,pled:loca80}. In the mid-1970's he independently discovered the fact, of significance to provability logic, that any normal modal logic containing the L\"ob axiom $\bo(\bo \ph\to \ph)\to\bo \ph$ must extend the logic K4, i.e.\ must contain the transitivity axiom $\bo \ph\to\bo\bo \ph$ (see \cite[p.~11]{bool:logi93}).
He also showed that any normal modal logic containing the Grzegorczyk axiom
$\bo(\bo(\ph\to\bo \ph)\to \ph)\to \ph$
is an extension of S4 \cite[pp.~157, 259]{bool:logi93}.

In 1981 Pledger was awarded the PhD degree by the University of Warsaw for a dissertation \cite{pled:some81} entitled
\emph{Some interrelations between geometry and modal logic}, with Les\l aw Szczerba (1938--2010) as promoter. Their relationship had developed from a sabbatical visit made by Szczerba in 1977  to Wellington, followed by one made  by Pledger in 1979 to Warsaw, where the University's regulations for doctoral study did not include a residence requirement. Szczerba was a former student of Wanda Szmielew. He worked mainly on the foundations of geometry, including in collaboration with Tarski \cite{szcz:meta79}.

Pledger's dissertation set out a one-sorted approach to plane geometry. The traditional approach to this subject is based on two-sorted structures whose sorts are a set $P$ of points and a set $L$ of lines, with a binary incidence relation $I$ from $P$ to $L$.
When $a I\ell$ the point $a $ is incident with the line $\ell$. If we allow this to also be written $\ell Ia $, then $I$ is extended to a symmetric relation on $P\cup L$. Then as  $a  I\ell Ia $ we get that $a  I^2a $, where $I^2$ is the relational composition of $I$ with itself. In a projective plane, any two points $a,b$ have a line $\ell$ passing through both, so $a  I\ell Ib $ and $aI^2b$. Each point is $I^2$-related to all points and not to any lines, and dually each line is $I^2$-related to all lines and not to any points.

The central idea of  \cite{pled:some81} was that the two-sorted theory ``can be made one-sorted by keeping careful account of whether the incidence relation is iterated an even or odd number of times''  \cite[p.1]{pled:some81}. This was implemented by imposing first-order conditions on an abstract structure $(X,I)$ with $I\sub X\times X$  that ensure that the composition $I^2$ is an equivalence relation on the set $X$ that has at most two equivalence classes. When it has exactly two, one of them can be chosen as $P$ and the other as $L$ to obtain the structure of a two-sorted projective plane. When there is only one $I^2$-equivalence class, it exhibits the structure of an elliptic plane, a type of geometry that was classically modelled by the surface of a Euclidean sphere with antipodal points identified.

We will refer to the structures $(X,I)$ defined in  \cite{pled:some81} for which this analysis holds as \emph{one-sorted  planes}. Those with two $I^2$-equivalence classes are called  \emph{projective} in \cite{pled:some81}. They include the structures $(P\cup L,I)$ where $I$ is the symmetric incidence relation of a traditional two-sorted projective plane based on $P$ and $L$. The ones with a single $I^2$-equivalance class are called \emph{elliptic}.

Now a structure of the form $\F=(X,I)$ is known in modal logic as a \emph{Kripke frame}, although Kripke called it a \emph{model structure} \cite{krip:sema63a}. A model on $\F$ for the language of modal logic is given by a truth relation specifying which formulas are true at which points of $X$, with $\bo\ph$ being true at point $a$ when $\ph$ is true at all points of $\{b\in X:aIb\}$. A formula is \emph{valid} in the frame $\F$  if it is true at every point in every model on $\F$.

Some of the first-order conditions defining a plane  $\F$ are equivalent to the validity in $\F$ of certain modal formulas. This led Pledger to study a logic, which he called 12g, which can be described as the extension of the modal logic KDB by the axiom
\begin{equation*}
\bo\bo\ph\to\bo\bo\bo\bo\ph,
\end{equation*}
whose validity is equivalent to $I^2$ being transitive.
Here KDB is the smallest normal modal logic to contain the deontic axiom
\begin{align*}
\mathrm{D:}\quad\bo\ph\to\di\ph,
\end{align*}
which corresponds to $I$ being serial (each element is $I$-related to something),
and the Brouwerian axiom
\begin{align*}
\mathrm{B:}\quad\di\bo\ph\to\ph,
\end{align*}
which corresponds to symmetry of $I$.
%
%
12g is sound for validity in all one-sorted  planes, including those derived from two-sorted projective planes as well as the elliptic ones.

The name `12g' relates to the fact that this logic has 12 non-equivalent proper affirmative modalities \cite[(37)]{pled:some81}, these being the finite sequences of the symbols $\bo$ and $\di$.\footnote{The naming system used in \cite{pled:some81} labels each logic  by the number of its proper affirmative modalities and a distinguishing letter.}  Any such sequence is reducible to one of 12 cases. It is shown in \cite[pp.12--15]{pled:some81} that there are nine different logics  that arise by adding to 12g an axiom of the form $A\ph\to B\ph$, where $A$ and $B$ are affirmative modalities. One of these logics, called 8f, can be obtained by adding to 12g  the axiom
\begin{equation*}
\mathrm{T}^3:\quad \bo\bo\bo\ph\to\ph
\end{equation*}
(the reason for  the name T$^3$ will be given in Section \ref{sec:strong}). 

8f is sound for validity in one-sorted elliptic planes. This can be illustrated in the classical model of the real elliptic plane, whose lines correspond to the great circles on a Euclidean sphere (with antipodal points identified). Each point has an associated line called its \emph{polar}\,: if the point is taken as the north pole then its polar is the equator.  Each line is the polar of a unique point, called the \emph{pole} of the line. The correspondence between poles and polars gives a bijection between the set of points and the set of lines. Representing each line by its pole allows the geometry to be determined by one sort of object (points), using the  binary relation $aIb$ that holds between points $a$ and $b$ when $b$ lies on the polar of $a$, which implies that $a$ lies on the polar of $b$. In this situation $a$ and $b$ are said to be \emph{conjugate}.
 If $c$ is the pole of the line joining two points $a$ and $b$, then $aIcIb$, showing that $aI^2b$. Hence there is one $I^2$-equivalence class.
Raphael Robinson\cite{robi:bina59} showed that the conjugacy relation can serve as the sole primitive from which to construct the geometry of the real elliptic plane. The line corresponding to a point $a$ is recoverable as the set $\{b:aIb\}$ of points conjugate to $a$.
Marek Kordos \cite{kord:elli73} has given an axiomatisation of plane elliptic geometry over formally real Pythagorean fields as a theory of a single binary relation.

Now a feature of the real elliptic plane is the notion of a \emph{self-polar triangle}, a triple $a,b,c$ of points such that each is the pole of the line joining the other two,  implying that  $aIbIcIa$. Hence $aI^3a$ where $I^3$ is the 3-fold composition of $I$ with itself. Each point $a$ is a vertex of a self-polar triangle so $I^3$ is reflexive, a property that is equivalent to the validity of  T$^3$. In Pledger's theory, reflexivity of $I^3$ ensures that a one-sorted plane $(X,I)$ has only one $I^2$-equivalence class and so is elliptic in that sense.

Some of the first-order conditions defining a plane  are not equivalent to the validity  of any modal formula. This was shown in Chapter VII of  \cite{pled:some81} by exhibiting counter-examples in which the condition in question fails to be preserved by a surjective \emph{bounded morphism}, a type of map between frames that preserves validity of modal formulas. Most notably this failure applies to the property of \emph{uniqueness} of the line joining two given points, and of the point at which two lines meet.

At the end of  \cite{pled:some81} the problem is raised of axiomatising the logic comprising all the modal formulas that are valid in all one-sorted projective and elliptic planes. As mentioned above this logic includes the system 12g, since 12g is \emph{sound} for validity in this class of planes. 
We will prove here that 12g is also \emph{complete} for validity in this class: any formula valid in these planes is a theorem of 12g,  so the sought for logic is just 12g itself. The proof makes use of a construction developed by  Yde Venema \cite{vene:point99} a couple of decades after Pledger's work.

In the 1990's a number of modal logicians became interested in the use of incidence geometries as models of various modal languages (see the references of \cite{vene:point99} and the review article \cite{balb:logi07} for information about this). Venema's contribution was to axiomatise the logic of two-sorted projective planes in a two-sorted modal language. In so doing he gave a step-by-step procedure for constructing a two-sorted projective plane as a bounded morphic preimage of a \emph{quasi-plane}, a structure satisfying the existence properties of projective planes, but not their uniqueness properties.

Now the completeness of 12g can also be expressed by saying that every 12g-consistent formula is satisfiable, i.e.\ true at some point,  in a model on a projective or elliptic plane. We will prove the stronger property that every 12g-consistent \emph{set} of formulas is  satisfiable in a model on the one-sorted plane  $(P\cup L,I)$ derived from some two-sorted projective plane.  We define a one-sorted notion of quasi-plane, and show that if  $\F$ is one of these and is connected, then it is a bounded morphic image of a two-sorted quasi-plane (Theorem \ref{conpreimage}). Combined with the construction from \cite{vene:point99}, we then infer that $\F$ is a bounded morphic image of the one-sorted plane derived from a two-sorted projective plane (Corollary \ref{quasipreimage}).

We then go on to show that 8f axiomatises the modal logic of elliptic one-sorted planes. The construction from \cite{vene:point99} does not seem applicable for this, since it leads to projective planes. Instead we apply the step by step \emph{method} 
of \cite{vene:point99} to directly construct a one-sorted elliptic plane as a bounded morphic preimage of a one-sorted quasi-plane (Theorem \ref{ellipticpreimage}).

In the final section, the filtration method is used to show that 12f and 8g  have the finite model property, i.e.\ are characterised by validity in \emph{finite} structures, which implies that they  are decidable.

\section{2-planes}

A \emph{two-sorted frame}, or \emph{$2$-frame}, is a structure $\F=(P,L,I)$ with $P\cap L=\emptyset$ and $I\sub P\times L$.
Elements of $P$ and $L$ are called \emph{points} and \emph{lines}, respectively, and $I$ is the \emph{incidence relation}.
When $aIb$ we may say that $a$ and $b$ are incident, that $a$ lies on $b$, that $b$ passes through $a$, etc.
Given points are said to be \emph{collinear} if there exists a line passing through all of them. A \emph{quadrangle} is a sequence of four points, no three of which are collinear. $\F$ is a \emph{projective plane} if it satisfies the following axioms:
\begin{enumerate}[\rm P1:]
\item
 any two distinct points have exactly one line passing through them both.
\item
any two distinct lines have  exactly one point lying on them both.
\item
there exists a quadrangle.
\end{enumerate}
Here (P3) is a \emph{non-degeneracy} condition. It is equivalent to requiring that the plane contains a triangle (three non-collinear points) and every line has at least three points.
We sometimes call a projective plane a \emph{projective $2$-plane} to emphasise the number of sorts involved.


A \emph{quasi-plane} \cite[Def.~3.1]{vene:point99} is a 2-frame satisfying:
\begin{enumerate}[\rm Q1:]
\item
 any two  points have at least one line passing through them both.
\item
any two  lines have  at least one point lying on them both.
\end{enumerate}
The two points and two lines hypothesized in these statements are not required to be distinct. Thus in a quasi-plane, any point is incident with at least one line, and any line with at least one point. 

If $\F=(P,L,I)$ and $\F'=(P',L',I')$ are 2-frames, then a \emph{homomorphism} from $\F$ to $\F'$ is given by a function $\th:P\cup L\to P'\cup L'$ with $\th(P)\sub P'$ and $\th(L)\sub L'$ that satisfies the `Forth' condition
\begin{enumerate}[F:]
\item
$aI b$ implies $\th(a)I'\th(b)$.
\end{enumerate}
A homomorphism $f$ is a \emph{bounded morphism} if  the following `Back' conditions hold for any
$a\in P$ and  $b'\in L'$ in B1, and any $a'\in P'$ and $b\in L$ in B2. 
\begin{enumerate}[B1:]
\item
If $\th(a)I' b'$, then there exists $b$ such that $aIb$ and $\th(b)=b'$.
\item 
If $a'I' \th(b)$, then there exists $a$ such that $aIb$ and  $\th(a)=a'$.
\end{enumerate}
If there exists a bounded morphism from $\F$ to $\F'$ that is surjective, then we say that $\F'$ is a \emph{bounded morphic image} of $\F'$.

\begin{theorem}  \emph{\cite[Theorem 3.2]{vene:point99}}    \label{venema}
Every quasi-plane is a bounded morphic image of a projective plane.
\qed
\end{theorem}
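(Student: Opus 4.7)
The plan is to construct $\F^*$ as the limit of an ascending chain $\F_0\sub\F_1\sub\cdots$ of finite $2$-frames equipped with compatible homomorphisms $\th_n:\F_n\to\F$, so that $\F^*=\bigcup_n\F_n$ is a projective plane and $\th^*=\bigcup_n\th_n:\F^*\to\F$ is a surjective bounded morphism. The invariant maintained at every stage is that each $\F_n$ is a \emph{partial $2$-plane}: any two distinct points of $\F_n$ lie on at most one common line, and dually for lines.

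I begin with $\F_0$ consisting of four isolated points $a_1,\dots,a_4$ (no incidences), each assigned an arbitrary image in $P$; these will serve as the vertices of the required quadrangle. I then enumerate fairly all tasks of the following kinds, dynamically generated as $\F_n$ grows. \emph{(E1)} For each pair of distinct points $a,a'$ of $\F_n$ not already joined by a line, adjoin a fresh line $\ell$ incident with only $a$ and $a'$, and set $\th(\ell)=m$ where, by Q1, $m$ is some line of $\F$ through $\th_n(a)$ and $\th_n(a')$. \emph{(E2)} Dually, using Q2. \emph{(S)} For each $x\in P\cup L$, adjoin a fresh isolated preimage of $x$. \emph{(B1)} For each $a\in P_n$ and $b'\in L$ with $\th_n(a)Ib'$ for which no existing line through $a$ is mapped to $b'$, adjoin a fresh line incident only with $a$ and mapped to $b'$. \emph{(B2)} Dually.

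The Forth condition is built into every choice of $\th$-image, and since every new element is incident in $\F_{n+1}$ only with the one or two pre-existing elements stipulated by the task --- no task ever alters incidences among pre-existing elements --- the invariant is preserved trivially. In the limit $\F^*$, axioms P1 and P2 hold: existence follows from (E1) and (E2), uniqueness from the invariant. Surjectivity of $\th^*$ comes from (S), and the Back conditions from (B1) and (B2). Non-degeneracy P3 holds because the unique line $\ell_{ij}$ joining the seed points $a_i$ and $a_j$ in $\F^*$ was introduced incident with only $a_i$ and $a_j$, hence cannot pass through any third seed point $a_k$; so $a_1,\dots,a_4$ form a quadrangle.

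The main obstacle is organising a fair enumeration of tasks, since each extension spawns new pairs to join, new Back obligations for newly added elements, and possibly new surjectivity targets. A standard back-and-forth bookkeeping argument --- interleaving already-pending tasks with those freshly generated, and using transfinite stages if $\F$ is uncountable --- handles this. With the scheduling in place, the verifications sketched above are routine, yielding the required projective plane $\F^*$ with surjective bounded morphism $\th^*:\F^*\to\F$.
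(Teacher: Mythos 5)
Your construction is correct and is essentially the same step-by-step method of Venema that the paper itself relies on for this theorem (the paper offers no independent proof, only the citation and a sketch of that method): an increasing chain of partial planes equipped with homomorphisms into the quasi-plane, with scheduled tasks discharging the existence (Q1/Q2), back (B1/B2), surjectivity and non-degeneracy requirements, while the ``at most one common line/point'' invariant --- preserved because new incidences always involve a freshly added element --- yields the uniqueness halves of P1 and P2 in the limit. The deviations are cosmetic (seeding with four isolated points to secure the quadrangle rather than an incident point--line pair, and explicit surjectivity tasks), and the only loose end is harmless: once transfinite stages are invoked the stages are no longer finite frames, but the invariant, the forth condition and the pending-task bookkeeping all pass through unions of chains, so the argument stands.
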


Venema's technique for proving  this result  starts with a quasi-plane $\F'$ and a diagram consisting of a point $a$ and a line $b$ of $\F'$ that are incident and, in a step by step manner, adds points and lines to the diagram in order to fulfil the existential assertions made by P3, Q1, Q2, B1 and B2. This is done in such a way that at each step the diagram constructed so far has a homomorphism to $\F'$ and preserves the uniqueness requirements of P1 and P2. At the end of the construction, the diagram has become a projective plane with a bounded morphism onto $\F'$.

The technique was applied to affine planes by Hodkinson and Hussain in \cite{hodk:moda08}. In   Section \ref{sec:elliptic} we will apply it to one-sorted structures.

\section{1-planes}  \label{sec:1planes}

A \emph{one-sorted frame}, or \emph{$1$-frame} is a structure $\F=(X,I)$ with $I\sub X\times X$. We continue to refer to $I$ as an \emph{incidence} relation. $I$ is \emph{serial} if its domain is $X$, i.e.\  every element $a$ of $X$ is incident ($aIb$) with some element $b$. We sometimes attribute a property of $I$ to $\F$, saying that $\F$ is serial when $I$ is etc.

Viewing $\F$ as a graph, we say that a sequence in $\F$  of the form
 $a_0Ia_1I\cdots Ia_{n}$ is an \emph{$I$-path from $a_0$ to $a_n$ of length $n\geq 1$} (the $a_i$'s need not be distinct).
 Let $I^n$ be the $n$-fold relational composition of $I$ with itself.  Then $aI^{n}b$ iff there exists an $I$-path of length $n$ in $\F$ from $a$ to $b$. We also let $I^0$ be the identity relation on $X$.

A \emph{$4$-cycle} is a 
path $aIbIcIdIa$ of length 4 from an element to itself.
It is \emph{proper} if $a\ne c$ and $b\ne d$.

A \emph{one-sorted plane}, or \emph{$1$-plane}, is a 1-frame that satisfies the three conditions
\begin{enumerate}[\rm O1:]
\item
For all $a$ and $b$, $aI^4b$ implies $aI^2b$.
\item
For all $a$ and $b$, $aI^2b$ or $aI^3b$.
\item
There are no proper 4-cycles, i.e.\  $aIbIcIdIa$ implies $a=c$ or $b=d$.
\end{enumerate}
It is called \emph{non-degenerate} if it also satisfies
\begin{enumerate}[\rm O4:]
\item
There exist $a,b,c,d,e,f$ with $aIbIcId$ and $eIf$, but 
$a$ is not incident with $d$, and none of $a$, $b$, $c$, $d$ is incident with $e$ or $f$.
\end{enumerate}

Let $ I_+=I\cup\{(b,a):aIb\}$,  the smallest symmetric relation including $I$. $I$ itself is symmetric iff $I_+=I$.
If $\F=(P,L,I)$ is a 2-frame, let  $\F_+$ be the 1-frame $(P\cup L, I_+)$.

\begin{theorem}  \emph{\cite[Chapter I]{pled:some81}}    \label{proj1plane}
If $\F$ is a projective $2$-plane, then $\F_+$ is a non-degenerate $1$-plane in which $P$ and $L$ are distinct equivalence classes under $I_+^2$, and the only such classes.
\end{theorem}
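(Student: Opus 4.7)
Write $J := I_+$. Since $I \sub P \times L$ and $P \cap L = \emptyset$, every $J$-edge runs between $P$ and $L$, so $(P \cup L, J)$ is bipartite with parts $P$ and $L$. Consequently any $J$-path of length $n$ preserves or swaps sort according to whether $n$ is even or odd. The pivotal lemma I would establish is: $aJ^2 b$ iff $a$ and $b$ lie in the same sort. One direction is the parity remark. For the converse, split into cases: $a,b \in P$ distinct (a common line from P1), $a,b \in L$ distinct (a common point from P2), $a = b \in P$ (some line through $a$, which exists because the non-degeneracy in P3 forces every point to be incident with some line), and $a = b \in L$ (dually). This lemma at once identifies $P$ and $L$ as the only equivalence classes of $J^2$ on $P \cup L$.

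With the lemma in hand, O1 is immediate: $aJ^4 b$ forces $a$ and $b$ to share sort, whence $aJ^2 b$. For O2, same-sort pairs are handled by the lemma; for pairs of opposite sort, say $a \in P$ and $b \in L$, pick any line $x$ through $a$ and then by P2 any point $y$ on both $x$ and $b$, yielding a length-3 path $a J x J y J b$. For O3, a proper 4-cycle $aJbJcJdJa$ would, by bipartiteness and after relabelling, consist of distinct points $a,c$ each incident with the distinct lines $b,d$, contradicting the uniqueness clause of P1.

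The most fiddly step is O4. I would read off its six witnesses from a quadrangle $a,c,a',c'$ supplied by P3: set $b$ to be the line $ac$, $d$ the line $ca'$, $e := c'$, and $f$ the line $a'c'$. Then $aJbJcJd$ and $eJf$ by construction. Each required non-incidence --- $\neg a I d$, $\neg c' I b$, $\neg c' I d$, $\neg a I f$, and $\neg c I f$ --- translates directly into the assertion that three of the four quadrangle points fail to be collinear, which is exactly the defining property of a quadrangle; the remaining non-incidences (between two elements of the same sort) hold vacuously by bipartiteness. No deep geometry is needed beyond P1--P3; the only real work lies in the bookkeeping of O4, which is the main place where the witness configuration has to be chosen with care.
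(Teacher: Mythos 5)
Your proposal is correct and follows essentially the same route as the paper: the bipartite/parity observation (same sort iff $I_+^2$-related) handles O1, O2 and the equivalence-class claim, P1/P2 give O3, and O4 is witnessed by a point--line path built from a quadrangle, which is just a relabelled version of the paper's Figure~\ref{figone} configuration. No gaps worth noting.
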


\begin{proof}
Conditions O1--O4 are axioms (1)--(4) of \cite{pled:some81}. We reprise the arguments given there for the validity in
 $\F_+$ of these conditions. For O1, if $aI_+{}^nb$ with $n$ even, then $a$ and $b$ are both points or both lines in $\F$, hence $a I_+{}^2b$
as noted in Section \ref{sec:diss}.

For O2, if $a$ and $b$ are of the same sort in $\F$, then $aI_+{}^2b$ as just observed. If $a$ is a point and $b$ is a line, take any line $c$  through $a$ and let point $d$ lie on $b$ and $c$: then $aI_+cI_+dI_+b$, hence  $aI_+{}^3b$. If  $b$ is a point and $a$ is a line, then  $bI_+{}^3a$ by the previous sentence, hence  $aI_+{}^3b$.

O3 captures the uniqueness expressed in P1 and P2. Let $aI_+bI_+cI_+dI_+a$.
If $a$ is a point, then so is $c$ while $b$ and $d$ are lines passing through both points, hence if $a\ne c$ then $b=d$ by P1.
If $a$ is a line, then so is $c$ while $b$ and $d$ are points on both lines, so again if $a\ne c$ then $b=d$ by P2. 

O4 captures the non-degeneracy expressed by P3, which ensures that $\F$ has some quadrangle $b,d,e,g$.  Figure \ref{figone}  shows how to then obtain lines $a,c,f$ so that $a,b,c,d,e,f$ fulfils O4 in $\F_+$  \cite[Figure 6]{pled:some81}.
\begin{figure}[h]       
\begin{center}{   
 \setlength{\unitlength}{2pt}
 
\begin{picture}(40,40)  
 \put(-2,-6){\line(1,3){14}}
 \put(-5,0){\line(1,0){47}}
\put(7,32){\line(3,-2){38}} 
 
\put(0,0){\circle*{1.5}} 
\put(30,0){\circle*{1.5}} 
\put(10,30){\circle*{1.5}} 
\put(25,20){\circle*{1.5}} 

\put(1,17){$a$}
\put(-3,2){$b$}
\put(15,2){$c$} 
\put(29,-5){$d$} 
\put(26,21){$e$} 
\put(40,13){$f$} 
\put(12,31){$g$} 

\end{picture}}
\end{center}
\caption{non-degeneracy}
\label{figone}
\end{figure}
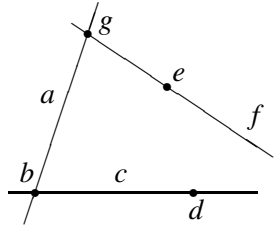

That proves  $\F_+$ is a non-degenerate $1$-plane.  Now given distinct points $a,b\in P$,  there is a line $c\in L$ passing through both, hence $aI_+cI_+b$  so $aI_+^2b$. Also any point $a$ is incident with some line $c$, so that $aI_+cI_+a$ and hence $aI_+^2a$. Thus any point is $I^2$-related to all points, but not to any lines because $I$ only connects elements of different type, so $I^2$ only connects elements of the same type. Dually,  any line is $I^2$-related to all lines, but not to any points.  Since $P$ and $L$ are disjoint they form a partition of $P\cup L$ with each being an $I^2$-equivalence class.
\end{proof}

A 1-frame $\F=(X,I)$ is \emph{connected} if any two of its elements have an $I_+$-path from one to the other. If $I$ is symmetric, this is an $I$-path. It follows from O2 that a 1-plane is connected.

It was shown in \cite[Chapter 1]{pled:some81} that in any 1-plane, $I$ is serial and symmetric,  $I^2$ is an equivalence relation with at most two equivalence classes, and when there are two such classes then by taking one as $P$ and the other as $L$ and restricting $I$ to $P\times L$ we obtain a 2-frame satisfying P1and P2. If the 1-plane is non-degenerate, then P3 holds as well, so it is a projective plane. When there is one $I^2$-equivalence class, by regarding each element as both a point and a line we can reach the same conclusion.

It was then observed on \cite[p.7]{pled:some81} that parts of this analysis depend only on weaker assumptions. To explain this we define a \emph{quasi-$1$-plane}  to be any 1-frame that is \emph{serial} and \emph{symmetric} and \emph{satisfies} O1. Thus any 1-plane is a quasi-$1$-plane.
The quasi-planes of the previous section may  be called \emph{quasi-2-planes} to distinguish them from the one-sorted notion just defined.

\begin{theorem}   \label{quasi1props}
Let $\F=(X,I)$ be a quasi-$1$-plane. Then:
\begin{enumerate}[\rm(1)]
\item 
$I^2$ is an equivalence relation.
\item
$aIb$ implies $aI^3b$.
\item
$\F$ is connected iff it satisfies O2.
\item
If $\F$ is connected, then for any element $a$, the subsets $I^2(a)=\{x:aI^2x\}$ and $I^3(a)=\{x:aI^3x\}$ are the only $I^2$-equivalence classes.
\item
If $\F$ is connected, then for any element $a$, if $xIy$ then one of $x$ and $y$ is in $I^2(a)$ and the other is in $I^3(a)$.
\end{enumerate}
\end{theorem}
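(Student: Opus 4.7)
The plan is to derive all five clauses using only seriality, symmetry of $I$, and axiom O1 ($aI^4b \Rightarrow aI^2b$), applying O1 repeatedly to compress $I$-paths. Clauses (1) and (2) are straightforward. For (1), reflexivity of $I^2$ follows because seriality yields some $c$ with $aIc$, and then $cIa$ by symmetry, so $aIcIa$; symmetry of $I^2$ is immediate from symmetry of $I$; transitivity is O1 directly, since $aI^2bI^2c$ means $aI^4c$, hence $aI^2c$. For (2), given $aIb$, symmetry gives $bIa$, and then $aIbIaIb$ witnesses $aI^3b$.

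For (3), one direction is immediate: O2 asserts that any two elements are $I^2$- or $I^3$-related, which gives a direct $I$-path between them. For the converse, symmetry of $I$ gives $I_+=I$, so connectedness provides an $I$-path of some length $n$ between any given $a$ and $b$. I would argue by induction on $n$ that any such path yields $aI^2b$ or $aI^3b$: the cases $n=0,1,2,3$ reduce to reflexivity from (1), to clause (2), and to the definitions of $I^2$ and $I^3$ respectively; for $n \geq 4$, the first four steps of the path form an $I^4$-segment, which O1 collapses to an $I^2$-segment, producing a shorter path of length $n-2$.

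For (4), assuming connectedness (hence O2), every element of $X$ lies in $I^2(a) \cup I^3(a)$, and $I^3(a)$ is nonempty by seriality together with clause (2). The heart of the argument is to show $I^3(a)$ lies inside a single $I^2$-equivalence class: for $x,y \in I^3(a)$, symmetry yields $xI^3aI^3y$, i.e.\ $xI^6y$, and two successive applications of O1 compress this first to $xI^4y$ and then to $xI^2y$. Conversely, if $x \in I^3(a)$ and $xI^2y$, then $aI^3xI^2y$ gives $aI^5y$, which O1 reduces to $aI^3y$, so $y \in I^3(a)$. Hence $I^3(a)$ is itself an $I^2$-class, and together with $I^2(a)$ exhausts $X$. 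Finally, for (5), given $xIy$: if $x \in I^2(a)$ then $aI^2xIy$ gives $aI^3y$, so $y \in I^3(a)$; dually, if $x \in I^3(a)$ then $aI^3xIy$ gives $aI^4y$, which O1 compresses to $aI^2y$, so $y \in I^2(a)$. The main technical step is the double application of O1 in clause (4) to reduce $I^6$ all the way to $I^2$.
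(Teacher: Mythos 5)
Your proof is correct and follows essentially the same route as the paper: each clause is obtained from seriality and symmetry of $I$ together with repeated O1-compression of $I$-paths, exactly as in the paper's argument. The only (harmless) variation is in clause (4), where the paper fixes a neighbour $b$ of $a$ and shows $I^3(a)=I^2(b)$ via a single length-4 compression, whereas you show directly that $I^3(a)$ is an $I^2$-equivalence class by compressing $I^6$ to $I^2$ and $I^5$ to $I^3$ --- the same technique, just applied twice.
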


\begin{proof}
\begin{enumerate}[\rm(1)]
\item
Reflexivity of $I^2$: for any $a$ there a $b$ with $aIb$ by seriality of $I$, so then $aIbIa$ by symmetry of $I$, hence $aI^2a$. Symmetry of $I^2$ follows from symmetry of $I$. Transitivity of $I^2$ is just what O1 asserts.
\item
If $aIb$, then $aIbI^2b$ by (1), giving $aI^3b$.
\item
O2 implies that any two points have an $I$-path between them of length 2 or 3, so $\F$ is connected. For the converse, take $a,b\in X$. If $a=b$, then $aI^2b$ by (1). If $a\ne b$, then assuming $\F$ is connected, since $I$ is symmetric we have $aI^n b$ for some $n\geq 1$. If $n=1$ then $aI^3b$ by (2). If $n\geq 4$, then since O1 ensures that any path  of length 4 between two points can be replaced by one of length 2, we get that $aI^{n-2} b$, then $aI^{n-4} b$ etc., leading to either  $aI^2b$ or $aI^3b$. Hence O2 holds.
\item
(See \cite[proof of (13)]{pled:some81}.)
Assume $\F$ is connected, and hence satisfies O2 by (3).  Fix any $a\in X$. As in (1), there is some $b$ with $aIbIa$. If $aI^3x$, then $bIaI^3x$, so $bI^4x$ and hence $bI^2x$ by O1. Conversely, if $bI^2x$ then $aIbI^2x$ and so $aI^3x$. This shows that $I^3(a)$ is the $I^2$-equivalence class $I^2(b)$ of $b$. But by O2, for any $x$, either $aI^2x$ or $aI^3x$, so the $I^2$-equivalence classes of $a$ and $b$ are all that there are.
\item
Let $xIy$. If $x\in I^2(a)$, then $aI^2xIy$, so $y\in I^3(a)$. But if $x\notin I^2(a)$, then $x\in I^3(a)$ by part (4), so $aI^3xIy$, hence $aI^4y$, then $y\in I^2(a)$ by O1.
\qedhere
\end{enumerate}
\end{proof}

By part (4) of this result, all connected quasi-1-planes, which includes all the  1-planes, have at most two $I^2$-equivalence classes.  Those with two classes will called \emph{projective} quasi-1-planes, while those with one class are \emph{elliptic}. So Theorem \ref{proj1plane} states that if $\F$ is a projective $2$-plane, then $\F_+$ is a non-degenerate projective $1$-plane.

Now a symmetric 1-frame can be viewed as an undirected graph with an edge joining vertices $a$ and $b$ just when $aIb$ (hence $bIa$). Figure \ref{twographs} 
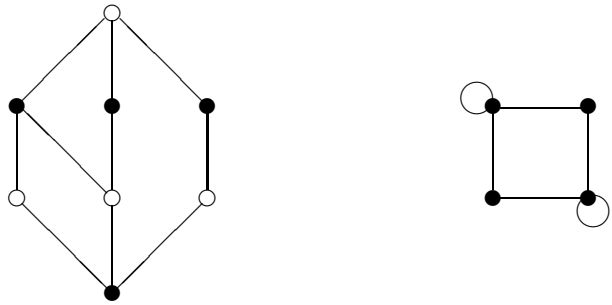
\begin{figure}[h]  
\begin{center}{   
 \setlength{\unitlength}{2pt}

\begin{picture}(130,40)            

\put(0,0){\circle{3}}
\put(18,0){\circle{3}}
\put(36,0){\circle{3}}

\put(0,17.5){\circle*{3}}
\put(18,17.5){\circle*{3}}
\put(36,17.5){\circle*{3}}

\put(18,35){\circle{3}}
\put(18,-18){\circle*{3}}

 \put(1,-1){\line(1,-1){16}}
 \put(17,1){\line(-1,1){16}}
 
 \put(0,1.5){\line(0,1){15}}
  \put(35,-1){\line(-1,-1){16}} 
   \put(36,1.5){\line(0,1){15}} 
    \put(18,16.5){\line(0,-1){15}}
  \put(18,18.5){\line(0,1){15}}    
  \put(18,-1.5){\line(0,-1){15}} 
  
  \put(0,17.5){\line(1,1){16.5}}
   \put(36,17.5){\line(-1,1){16.5}}
   
  
  \put(90,0){\circle*{3}}
  \put(90,17.5){\circle*{3}}
 \put(108,0){\circle*{3}}
  \put(108,17.5){\circle*{3}}
    
\put(87,19){\circle{6}}
  \put(109,-2.5){\circle{6}}
  
   \put(90,1.5){\line(0,1){15}}
   \put(91.5,0){\line(1,0){15}}
   
   \put(91.5,17){\line(1,0){15}}
  \put(108,0.5){\line(0,1){16}}   
 
 \end{picture}}

\end{center}

\bigskip\bigskip\bigskip
\caption{two quasi-1-planes, not 1-planes}
\label{twographs}
\end{figure}
shows two such graphs,  which are Figures 105 and 107 from \cite{pled:some81}. The left one is a projective quasi-1-plane with the black and white circles displaying the two $I^2$-equivalence classes. This graph also satisfies the non-degeneracy condition O4. The right graph is an elliptic quasi-1-plane, with the two looped edges displaying $I$-reflexive vertices. Both graphs contain proper 4-cycles violating O3, so neither is a 1-plane.

We now extend the notion of bounded morphism to variously-sorted structures. First, a bounded morphism from a 1-frame $\F=(X,I)$ to a 1-frame $\F'=(X',I')$ is given by a function $\th:X\to X'$ that is a homomorphism, i.e.\ satisfies the forth condition F for all $a,b\in X$, and satisfies the back condition B1 for all $a\in X$ and $b\in X'$.
This is the standard notion of bounded morphism for 1-frames in modal logic.
It is a routine fact that the functional composition of two bounded morphisms of this type is also a bounded morphism.

Next we define a bounded morphism from a 2-frame $\F=(P,L,I)$ to a 1-frame $\F'=(X',I')$ as being given by a function $\th:P\cup L\to X'$ that satisfies the homomorphism condition F for all $a\in P$ and $b\in L$, the condition B1 for all $a\in P$ and $b\in X'$; and the condition B2 for all  $a'\in X'$ and $b\in L$.

\begin{lemma}  \label{bmorphs}
\begin{enumerate}[\rm(1)]
\item 
If $\th$ is a bounded morphism from a 2-frame $\F=(P,L,I)$ to a 2-frame $\F'=(P',L',I')$, and $\tau$ a  bounded morphism from  $\F'$ to a 1-frame $\F''=(X'',I'')$, then the composition $\tau\circ\th$ is a bounded morphism from $\F$ to $\F''$.
\item
If $\th$ is a bounded morphism from a 2-frame $\F=(P,L,I)$ to a 1-frame $\F'=(X',I')$, and $I'$ is symmetric, then $\th$ is also a bounded morphism from the 1-frame $\F_+=(P\cup L, I_+)$ to $\F'$.
\end{enumerate}
\end{lemma}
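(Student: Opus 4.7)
For Part (1), the plan is a routine composition argument, tracking the sort of each element through $\th$ and $\tau$. For F, if $a\in P$ and $b\in L$ with $aIb$, then $\th(a)I'\th(b)$ by F for $\th$, with $\th(a)\in P'$ and $\th(b)\in L'$, so F for $\tau$ gives $\tau(\th(a))I''\tau(\th(b))$. For B1 with $a\in P$ and $b''\in X''$ satisfying $(\tau\circ\th)(a)I''b''$, apply B1 for $\tau$ at $\th(a)\in P'$ to get $b'\in L'$ with $\th(a)I'b'$ and $\tau(b')=b''$, then B1 for $\th$ to obtain $b\in L$ with $aIb$ and $\th(b)=b'$, whence $(\tau\circ\th)(b)=b''$. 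The B2 case is dual: use B2 for $\tau$ to pull $a''\in X''$ back to some $a'\in P'$, then B2 for $\th$ to pull $a'$ back to some $a\in P$ with $aIb$.

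For Part (2), I need to verify that $\th$ satisfies the 1-frame versions of F and B1 with respect to $I_+$ on $P\cup L$ and $I'$ on $X'$. The Forth condition splits on whether $aIb$ or $bIa$: the former is immediate from F for the 2-frame-to-1-frame $\th$; the latter applies F to $bIa$ to obtain $\th(b)I'\th(a)$, and then symmetry of $I'$ yields $\th(a)I'\th(b)$. For the 1-frame B1, suppose $\th(a)I'b'$ for $a\in P\cup L$ and $b'\in X'$. If $a\in P$, apply B1 for $\th$ to produce $b\in L$ with $aIb$ (hence $aI_+b$) and $\th(b)=b'$. If $a\in L$, use symmetry of $I'$ to rewrite the hypothesis as $b'I'\th(a)$, then apply B2 (with $a$ playing the role of the line argument and $b'$ the new point to be pulled back) to obtain $c\in P$ with $cIa$ and $\th(c)=b'$; then $aI_+c$ as required.

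Neither part presents a genuine obstacle; the only point to watch is that the mixed 2-to-1 setting has two back conditions (B1 and B2), and in Part (2) one must apply each to the correct sort of source element, combining with the symmetry of $I'$ to cover both $I$ and its converse when passing to $I_+$.
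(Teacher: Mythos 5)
Your proposal is correct and follows essentially the same route as the paper's proof: composing the back conditions B1 and B2 stepwise through $\tau$ and $\th$ for part (1), and for part (2) splitting on whether $a\in P$ or $a\in L$ (equivalently on $aIb$ versus $bIa$) and using symmetry of $I'$ to invoke B1 or B2 as appropriate. No gaps.
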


\begin{proof}
\begin{enumerate}[\rm(1)]
\item
 $\tau\circ\th$ is a homomorphism because $\th$ and $\tau$ are: if $aIb$ then $\th(a)I'\th(b)$ by F for $\th$ and so by F for 
 $\tau$,   $\tau(\th(a))I''\tau(\th(b))$ , i.e.\ $(\tau\circ \th)(a)I''(\tau\circ \th)(b)$. 
 
 For B1, if  $\tau(\th(a))I'' b''$, then by B1 for $\tau$ there exists $b'\in L'$ with $\th(a)I' b'$ and $\tau(b')=b''$. Hence by B1 for $\th$, there exists $b\in L$ with $aIb$ and $\th(b)=b'$, hence $(\tau\circ \th)(b)=b''$.
 The proof of B2 for $\tau\circ \th$ is similar,
\item
F: let $aI_+b$ in $\F_+$. If $aIb$, then $\th(a)I'\th(b)$ by F for $\th:\F\to\F'$.
Otherwise we have $bIa$, hence $\th(b)I'\th(a)$ likewise, and so again $\th(a)I'\th(b)$ as $I'$ is symmetric.
This shows that F holds for $\th:\F_+\to\F'$.

B1: let $\th(a)I'b'$ in $\F'$.  If $a\in P$, then by B1 for $\th:\F\to\F'$ there exists $b\in L$ with $aIb$, hence $aI_+ b$, and $\th(b)=b'$. Otherwise we have $a\in L$.
But $b'I'\th(a)$ by symmetry, so by B2 for $\th:\F\to\F'$ there exists $b\in P$ with $bIa$, hence $aI_+ b$, and $\th(b)=b'$.
This shows B1 holds for $\th:\F_+\to\F'$.
\qedhere
\end{enumerate}
\end{proof}

\begin{theorem} \label{conpreimage}
Every connected  quasi-1-plane is a bounded morphic image of a quasi-2-plane.
\end{theorem}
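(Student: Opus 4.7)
My plan is to treat two cases based on the $I^2$-equivalence classes. Let $\F=(X,I)$ be the given connected quasi-$1$-plane. By Theorem \ref{quasi1props}(4), $X$ consists of either two such classes (the projective case) or just one (the elliptic case). In each case I will exhibit a quasi-$2$-plane $\F^*=(P,L,J)$ together with a surjective bounded morphism $\th\colon P\cup L\to X$.

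For the projective case, let $P$ and $L$ be the two $I^2$-equivalence classes, put $J=I\cap(P\times L)$, and take $\th$ to be the identity on $X=P\cup L$. Part (5) of Theorem \ref{quasi1props} is the key: every $I$-edge joins an element of $P$ to one of $L$, so $J$ together with its transpose recovers $I$. Condition F is immediate, while B1 (for $a\in P$, $b'\in X$) and B2 (for $b\in L$, $a'\in X$) follow directly from part (5) and symmetry of $I$: if, say, $\th(a)Ib'$ with $a\in P$, then $b'\in L$, so $aJb'$ and $\th(b')=b'$. For Q1, any $a,b\in P$ satisfy $aI^2b$, hence there is $c$ with $aIcIb$; part (5) forces $c\in L$, so $aJc$ and $bJc$. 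Q2 is dual.

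For the elliptic case I use a doubling construction: set $P=X\times\{0\}$ and $L=X\times\{1\}$, declare $(a,0)J(b,1)$ iff $aIb$, and define $\th(a,i)=a$. Surjectivity and F are immediate, while B1 and B2 follow at once from the definition of $J$ and symmetry of $I$. For Q1, given $(a,0),(b,0)\in P$, the fact that there is only one $I^2$-class gives $aI^2b$; choosing $c$ with $aIcIb$ yields $(a,0)J(c,1)$ and $(b,0)J(c,1)$, as required. Q2 is dual.

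The obstacle to a uniform argument is that the elliptic doubling breaks Q1 in the projective case (two points from distinct classes need not be $I^2$-related), while the bipartite restriction is unavailable when there is only one class. Splitting by the number of $I^2$-classes and invoking Theorem \ref{quasi1props}(5) for the projective bipartition is, I expect, the cleanest route, after which every verification is routine and immediate from the definitions.
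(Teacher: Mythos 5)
Your proposal is correct and follows essentially the same route as the paper: split on the number of $I^2$-equivalence classes, restrict $I$ to the bipartition $P\times L$ with the identity map in the two-class case, and use the disjoint doubling of $X$ with the projection map in the one-class case. The only cosmetic difference is that you cite Theorem \ref{quasi1props}(5) to see that every $I$-edge crosses between the two classes, where the paper re-derives this directly from O1 in checking B1, B2, Q1, Q2.
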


\begin{proof}
Let $\F=(X,I)$ be a connected  quasi-1-plane.
By Theorem \ref{quasi1props}(4), $\F$ has at most two $I^2$-equivalence classes. 
Take first the case that it has exactly two.
Take an element $a$ and let $P=\{x:aI^2x\}$ and $L=\{x:aI^3x\}$.
 Theorem \ref{quasi1props}(4) says these are the two $I^2$-equivalence classes, 
 so they are disjoint and partition $X$. Form the 2-frame
    $\F'=(P,L,I')$, where $I'$ is the restriction of $I$ to $P\times L$.
    Let $\th:P\cup L\to X$ be the identity function.
    
Since $I'\sub I$, $\th$ is a homomorphism from $\F'$ onto $\F$. To see it satisfies B1, let $b\in P$ and $\th(b)Ic$. Then $aI^2bIc$, so $aI^3c$ and hence $c\in L$ with $bI'c$. As $\th(c)=c$, this proves B1.   For B2, let $c\in L$ and $bI\th(c)$. Then $aI^3 c$ and $bIc$, hence  $cIb$ by symmetry, so $aI^4b$. But then $aI^2b$ by O1, so $b\in P$, with $bI'c$ and $\th(b)=b$.

We have now shown that $\th$ is a surjective bounded morphism from $\F'$ onto $\F$. It remains to show that $\F'$ is a a quasi-2-plane. For Q1, take two points $b,d\in P$. Then $bI^2d$ as they are in the same equivalence class, so $bIcId$ for some $d$. As $b\in P$ and $bIc$ we get $c\in L$ as above, so $c$ is a line. As $bIc$ and $dIc$, this line passes through $b$ and $d$. The proof of Q2 is similar: any two lines $b,d\in L$ have $bIcId$, for some $c\in P$ with $c$ lying on $b$ and $d$.

That concludes the first case. The alternative is that $\F$ has only one $I^2$-equivalence class. Then $aI^2b$ for all $a,b\in X$. Take two disjoint copies $P_0=\{a_0:a\in X\}$ and $L_1=\{a_1:a\in X\}$ of $X$ and let
  $\F'=(P_0,L_1,I')$, where $a_0I'b_1$ iff $aIb$. Let $\th:P_0\cup L_1\to X$ be the projection: $\th(a_0)=\th(a_1)=a$.
  
  The definition of $I'$ ensures that $\th$ is a homomorphism from $\F'$ onto $\F$. For B1, if $a_0\in P_0$ and $\th(a_0)Ib$, then $aIb$ and so $a_0I'b_1$ with $\th(b_1)=b$. The proof of B2 is similar. Thus $\th$ is a  bounded morphism. 
  
 It remains to show that $\F'$ is a a quasi-2-plane in this case. But given points $a_0,b_0\in P_0$ in $\F'$ we have $aI^2b$ in $\F$, so there exists $c$ with $aIcIb$, hence $bIc$, so that $a_0I'c_1$ and $b_0I'c_1$, making $c_1$ a line in $\F'$ that passes through $a_0$ and $b_0$. That proves Q1. The proof of Q2 is similar:  any two lines $a_1,b_1$ of $\F'$ have  $aIcIb$ for some $c$, and $c_0$ is a point in $\F'$  lying on  both lines.
\end{proof}

\begin{corollary}  \label{quasipreimage}
Every connected  quasi-1-plane is a bounded morphic image of a non-degenerate projective 1-plane of the form $\G_+$ where $\G$ is a projective 2-plane.
\end{corollary}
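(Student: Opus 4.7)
The plan is to chain together the three results that have just been assembled: Theorem \ref{conpreimage}, Venema's Theorem \ref{venema}, and Theorem \ref{proj1plane}, using Lemma \ref{bmorphs} to convert bounded morphisms of the appropriate sorts.

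Starting from a connected quasi-1-plane $\F$, I would first invoke Theorem \ref{conpreimage} to obtain a quasi-2-plane $\F'$ together with a surjective bounded morphism $\tau\colon\F'\to\F$ (from a 2-frame to a 1-frame). Next I would invoke Theorem \ref{venema} to obtain a projective 2-plane $\G$ together with a surjective bounded morphism $\th\colon\G\to\F'$ (between 2-frames). By Lemma \ref{bmorphs}(1), the composite $\tau\circ\th\colon\G\to\F$ is then a bounded morphism from the 2-frame $\G$ to the 1-frame $\F$, and it is surjective because both factors are.

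To turn this into the one-sorted preimage asserted in the corollary, I would appeal to Lemma \ref{bmorphs}(2): since $\F$ is a quasi-1-plane, its incidence relation is symmetric, so $\tau\circ\th$ is also a surjective bounded morphism from the 1-frame $\G_+$ onto $\F$. Finally, Theorem \ref{proj1plane} tells us that $\G_+$ is itself a non-degenerate projective 1-plane, which is exactly the form of preimage required.

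There is really no single hard step here; the work is entirely in verifying that the sorts line up so that Lemma \ref{bmorphs} applies. The only thing worth double-checking is the symmetry hypothesis in Lemma \ref{bmorphs}(2), which is guaranteed by the standing assumption that quasi-1-planes (and a fortiori the target $\F$) have symmetric incidence.
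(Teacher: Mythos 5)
Your proposal is correct and follows essentially the same route as the paper: Theorem \ref{conpreimage} followed by Theorem \ref{venema}, composed via Lemma \ref{bmorphs}(1), converted to a 1-frame morphism by Lemma \ref{bmorphs}(2) (using symmetry of the quasi-1-plane's incidence relation), and concluded with Theorem \ref{proj1plane}. Nothing is missing.
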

\begin{proof}
Let $\F=(X,I)$ be a connected  quasi-1-plane. By the Theorem there is a quasi-2-plane $\F'$ and a surjective bounded morphism from $\F'$ to $\F$. By Theorem \ref{venema} there is a projective 2-plane $\G$ and a surjective bounded morphism  from $\G$ to $\F'$. By Lemma \ref{bmorphs}(1) these compose to give a surjective bounded morphism $\G\to\F$, which by
Lemma \ref{bmorphs}(2) also acts as a surjective bounded morphism $\G_+\to\F$ of 1-frames.
By Theorem \ref{proj1plane}, $\G_+$ is a non-degenerate projective  1-plane.
\end{proof}

 \section{The elliptic case}  \label{sec:elliptic}

A quasi-1-plane is elliptic when it has only one $I^2$-equivalence class, a property expressed simply by
\begin{enumerate}
\item[O5:]
For all $a$ and $b$,  $aI^2b$.
\end{enumerate}
This condition implies O1 and O2 (equivalently connectedness) as well as seriality of $I$, so we can characterise an elliptic quasi-1-plane as a  1-frame satisfying O5 and symmetry, and an elliptic 1-plane as a 1-frame satisfying O5 and O3.

\begin{lemma}
Any $1$-frame satisfying O5 and O3 is symmetric.
\end{lemma}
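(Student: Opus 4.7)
The plan is to deduce $bIa$ from $aIb$ by exhibiting a 4-cycle through $a$ and $b$ and letting O3 collapse it in a way that immediately produces the reverse edge. Since O5 gives a 2-step $I$-path between any two elements, we have plenty of freedom to manufacture such cycles.

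Suppose $aIb$. The first step is to apply O5 to the pair $(b,b)$, obtaining some $x$ with $bIx$ and $xIb$. The second step is to apply O5 to the pair $(x,a)$, obtaining some $z$ with $xIz$ and $zIa$. Splicing these three pieces together with the given edge $aIb$ produces the length-4 cycle
\begin{equation*}
 a\, I\, b\, I\, x\, I\, z\, I\, a .
\end{equation*}

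The third step is to invoke O3, which forces $a=x$ or $b=z$. In the first case the edge $bIx$ becomes $bIa$; in the second the edge $zIa$ becomes $bIa$. Either way we have the required reverse edge.

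There is no real obstacle: once one notices that O5 lets us freely pad any given edge into a 4-cycle while keeping $a$ and $b$ in adjacent positions, the argument is essentially forced, because O3 is precisely the statement that 4-cycles must collapse at one of the two diagonals.
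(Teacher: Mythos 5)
Your proof is correct and follows essentially the same route as the paper's: build a 4-cycle $aIbIxIzIa$ by extending the given edge with two applications of O5 (the paper gets the first extension from seriality, which O5 implies, rather than from O5 applied to $(b,b)$, but this is an immaterial difference), then let O3 collapse the cycle so that one of its edges becomes $bIa$.
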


\begin{proof}
Let $aIb$. By seriality there exists $x$ with $bIx$. By O5 $xI^2a$, so there is a $y$ with $xIyIa$. Now we have a 4-cycle $aIbIxIyIa$. By O4 either  $a=x$ or $b=y$. In either case $bIa$.
\end{proof}
Pledger suggested that the theory based on O5 and O3 ``can be regarded as elliptic plane geometry in a general sense which permits degenerate planes'' \cite[p.18]{pled:some81}. He gave an analysis of degenerate elliptic planes, showing that they can be excluded by the following shorter substitute for O4 (see  \cite[(79)]{pled:some81}):
\begin{enumerate}[O4$'$:]
\item
There exist $a,b,c,d$ such that $aIbIc$ and $a\ne c$ and $b\ne d$, and neither $aIc$ nor $bId$.
\end{enumerate}
The elliptic quasi-1-plane depicted on the right of Figure \ref{twographs} satisfies O4$'$.

 The projective 1-plane $\F_0$ depicted in Figure \ref{F0}
\begin{figure}[h]  
\begin{center}{   
 \setlength{\unitlength}{2pt}
 
\begin{picture}(45,20)  

\put(0,0){\circle*{3}}              
 \put(0,0){\line(1,0){13.5}}
 
 \put(15,0){\circle{3}}
 \put(16.5,0){\line(1,0){15}}
 
 \put(30,0){\circle*{3}}
 \put(30,0){\line(1,0){13.5}}
 
 \put(45,0){\circle{3}}

\put(-3,5){$v_0$}
\put(12,5){$v_1$}
\put(27,5){$v_2$}
\put(42,5){$v_3$}
 
 \end{picture}}

\end{center}
\caption{\ $\F_0$}
\label{F0}
\end{figure}
has four vertices, with an edge joining $v_i$ to $v_{i+1}$ for each $i<3$, and no other edges. It satisfies O4$'$. Any given symmetric frame will satisfy O4$'$ if it contains (a copy of) $\F_0$ as a \emph{full} subgraph, i.e.\ the frame  has no edges between vertices of $\F_0$ other than the three edges already belonging to $\F_0$.

We want to extend Corollary \ref{quasipreimage} to the elliptic case, to show that any elliptic quasi-1-plane is a bounded morphic image of an elliptic 1-plane that is non-degenerate in the sense of O4$'$. But the first step in the proof of that corollary destroys the elliptic property O5. So instead we will directly build a 1-plane out of an elliptic quasi-1-plane by adapting the step by step method of \cite{vene:point99}.

The uniqueness property O3 can be given a more evocative formulation when $I$ is symmetric. If $aIb$ we may say that $a$ and $b$ are \emph{neighbours}.
If $aIbIc$ then $b$ is a \emph{common neighbour of $a$ and $c$}. If also $d$ is a common neighbour of $a$ and $c$, then (using symmetry) we have the 4-cycle $aIbIcIdIa$, so if $a\ne c$, then O3 enforces $b=d$. We see that in the presence of symmetry, O3 is equivalent to the requirement that  any two distinct elements have \emph{at most one} common neighbour.
Combining that with O5 we get that in an elliptic 1-plane any two distinct elements have \emph{exactly one} common neighbour.

\begin{theorem} \label{ellipticpreimage}
Every elliptic  quasi-1-plane is a bounded morphic image of a non-degenerate elliptic 1-plane.
\end{theorem}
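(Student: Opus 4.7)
The plan is to adapt the step-by-step method of \cite{vene:point99} directly in the one-sorted setting, bypassing the detour through 2-frames used in Theorem \ref{conpreimage} (which would destroy the elliptic property O5). Starting from an elliptic quasi-1-plane $\F=(X,I)$, I would construct by recursion a chain $\G_0\sub\G_1\sub\cdots$ of symmetric 1-frames $\G_\a=(Y_\a,J_\a)$, each equipped with a homomorphism $\th_\a:\G_\a\to\F$, while maintaining two invariants: (i) $\G_\a$ has no proper 4-cycles, and (ii) the subgraph of $\G_\a$ induced on the four vertices of $\F_0$ (Figure \ref{F0}) is exactly $\F_0$, with none of these $J_\a$-reflexive. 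To start, let $\G_0$ be a disjoint copy of $\F_0$, pick a path $c_0Ic_1Ic_2Ic_3$ in $\F$ (which exists by seriality and symmetry of $\F$), and set $\th_0(v_i)=c_i$.

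Two kinds of defects drive the recursion. A \emph{back defect} is a pair $(a,c)$ with $a\in Y_\a$, $c\in X$, $\th_\a(a)Ic$, and no $b\in Y_\a$ satisfying $aJ_\a b$ and $\th_\a(b)=c$; repair it by adjoining a fresh vertex $b$, adding only the symmetric edge between $a$ and $b$, and setting $\th_{\a+1}(b)=c$. An \emph{O5 defect} is a pair $(p,q)$ of vertices of $\G_\a$ with no common $J_\a$-neighbour; using O5 in $\F$, choose $d\in X$ with $\th_\a(p)IdI\th_\a(q)$, then adjoin a fresh vertex $b$ with edges to $p$ and $q$ only and set $\th_{\a+1}(b)=d$. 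A standard dovetailing (transfinite, if $X$ is uncountable) ensures that every defect appearing at some stage is eventually addressed.

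The main point to verify is that invariant (i) survives each step. Any hypothetical new proper 4-cycle $v_0Jv_1Jv_2Jv_3Jv_0$ in $\G_{\a+1}$ must pass through the fresh vertex $b$. Since $b$'s neighbours in $\G_{\a+1}$ form a set of size at most two (either $\{a\}$ or $\{p,q\}$), a short case analysis on which of the $v_i$ equal $b$ forces either $v_0=v_2$ (so the cycle is improper) or, in the O5 case with $b=v_1$ and $\{v_0,v_2\}=\{p,q\}$, exhibits $v_3$ as a common $J_\a$-neighbour of $p$ and $q$ already present in $\G_\a$, contradicting the very defect being repaired. The homomorphism condition F is guaranteed by our choice of $\th_{\a+1}(b)$; symmetry is built into each step; and invariant (ii) is maintained because we never add edges among old vertices or self-loops at the $v_i$.

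Setting $\G=\bigcup_\a\G_\a$ and $\th=\bigcup_\a\th_\a$, we obtain a symmetric 1-frame satisfying O3 and O5, whence O1 and O2 are trivial, so $\G$ is an elliptic 1-plane. Non-degeneracy O4$'$ is witnessed by $v_0,v_1,v_2,v_3$ via invariant (ii). The bounded-morphism condition B1 holds because all back defects were resolved, and surjectivity of $\th$ follows from connectedness of $\F$ (a consequence of O5 via Theorem \ref{quasi1props}(3)) by chaining back steps from $v_0$ along any $I$-path in $\F$. The main obstacle throughout is the local proper-4-cycle analysis at each step; once it is settled, the rest of the argument is routine step-by-step bookkeeping.
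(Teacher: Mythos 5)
Your proposal is correct and follows essentially the same route as the paper: a direct one-sorted step-by-step construction with the same two defect types (back/B1 and O5), fresh-vertex repairs, the invariants ``no proper 4-cycles'' (equivalently, at most one common neighbour, given symmetry) and ``$\F_0$ as a full subgraph'', and the same key verification that repairing an O5-defect cannot create a second common neighbour precisely because the repaired pair had none. The only differences are bookkeeping (your single dovetailed enumeration versus the paper's Lemma \ref{N*} plus an $\omega$-chain) and the surjectivity argument (lifting a path from $v_0$ by repeated B1, versus the paper's use of ellipticity to lift a 2-path), which are immaterial.
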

\begin{proof}
This will occupy the rest of the current section.
Fix an elliptic  quasi-1-plane $\F'=(X',I')$.
Define a \emph{network} to be a structure $\N=(X,I,\th)$ comprising a \emph{symmetric} 1-frame $(X,I)$, which we may 
denote   $\F_{\N}$,
and a function  $\th:X\to X'$, which we may denote $\th_{\N}$. We say that another network $\N^+=(X^+,I^+,\th^+)$ \emph{extends} $\N$ if $X\sub X^+$, $I$ is the restriction of $I^+$ to $X\times X$, and $\th$ is the restriction of $\th^+$ to $X$.

A network $\N$ is called \emph{coherent} if it satisfies the following conditions
\begin{enumerate}[C1:]
\item 
$\th_{\N}:\F_{\N}\to\F'$ is a homomorphism, i.e.\ satisfies the forth condition F.
\item
distinct elements of $X$ have at most one common neighbour.
\item
$\F_0$ is a full subgraph of $\F_{\N}$.
\end{enumerate}
For instance, we obtain a coherent network $\N_0=(\F_0,\th_0)$ by noting that since $\F'$ is serial it contains a 3-path $a_0Ia_1Ia_2Ia_3$, and putting $\th_0(v_i)=a_i$ for all $i\leq 3$. We want to build $\N_0$ up to a coherent network $\N$  that satisfies O5 and has $\th_{\N}$ as a bounded morphism. A given network may fail one or both of these requirements, and instances of such failure are called \emph{defects}. These will require \emph{repair}. There are two types of possible defect for a network $\N=(X,I,\th)$:

\begin{description}
\item[\rm B1-\emph{defect}:] 
this is a pair $(a,b')\in X\times X'$ such that $\th(a)I'b'$ but there is no $b\in X$ with $aIb$ and $\th(b)=b'$.

\item[\rm O5-\emph{defect}:]
 a pair $(a,b)\in X\times X$ with no common neighbour, i.e.\ $aI^2b$ fails.
\end{description}
We show that any defect in a coherent network can be repaired by constructing a coherent extension that contains the element described as lacking by the defect. Once a defect is repaired, it remains so in all further coherent extensions.

To repair a B1-defect $(a,b')$ in a coherent network $\N$, take a new object $b$ not in $X$ or $X'$ and extend $\N$ to $\N^+$ by putting
\begin{align*}
X^+ &=X\cup\{b\},
\\
I^+ &= I\cup\{(a,b),(b,a)\}
\\
\th^+&=\th\cup\{(b,b')\}.
\end{align*}
The definition of $I^+$ ensures that it is symmetric because $I$ is. Thus $\N^+$ is a network having $aI^+b$ and $\th^+(b)=b'$, so $(a,b')$ is no longer a B1-defect in $\N^+$. It remains to check that $\N^+$ is coherent. For C1, since $\N$ satisfies C1 we only have 
to check that $\th^+(a)I'\th^+(b)$ and $\th^+(b)I'\th^+(a)$. But this follows because $\th^+(a)=\th(a)I'b'=\th^+(b)$  and  $I'$ is symmetric. For C2, take two distinct elements of $X^+$. If one of them is $b$, then they have at most one common $\N^+$-neighbour because $b$ has only the one $\N^+$-neighbour $a$. If neither of them is $b$ then they have at most one common neighbour in $\N$ by C2 for $\N$, and they do not have $b$ as a common neighbour as $b$ has only one neighbour, so they have at most one neighbour in $\N^+$. For C3, $\F_0$ has no extra edges in $\N$ by C3 for $\N$, and $\N^+$ has only one more edge, with a vertex $b$ not in $\N$, hence not in $\F_0$, so $\F_0$ has no extra edges in $\N^+$.
Thus $\N^+$ is coherent.

To repair an O5-defect $(a,b)$ in a coherent network $\N$, observe that as $\F'$ is elliptic we have  $\th(a)(I')^2\th(b)$, so there exists a $d\in X'$ with $\th(a)I'dI'\th(b)$. 
Take a new object $c\notin X\cup X'$ and extend $\N$ to $\N^+$ by putting
\begin{align*}
X^+ &=X\cup\{c\},
\\
I^+ &= I\cup\{(a,c),(c,a), (c,b),(b,c)\}
\\
\th^+&=\th\cup\{(c,d)\}.
\end{align*}
Again the definition of $I^+$ ensures that it is symmetric because $I$ is. Thus $\N^+$ is a network having $aI^+cI^+b$, hence  $a(I^+)^2b$, so $(a,b)$ is no longer an  O5-defect in $\N^+$. We check that $\N^+$ is coherent. For C1, it suffices since $\N$ satisfies C1 to show that  $\th^+(a)I'\th^+(c)$, $\th^+(c)I'\th^+(b)$ and their inverses. But this follows because $\th^+(a)=\th(a)I'd=\th^+(c)$  and similarly  $\th^+(c)=dI'\th^+(b)$, and $I'$ is symmetric.
For C2, let $x$ and $y$ be \emph{distinct} members of $X^+$. If one of them, say $y$, is equal to $c$, then $x\in X$. If $x$ and $y=c$ have more than one common neighbour in $\N^+$, then their common neighbours must be $a$ and $b$, as those are all the neighbours $c$ has. Hence  $a$ and $b$ are neighbours of $x$. But then $aIxIb$, contradicting the fact that $(a,b)$ is an O5-defect. So in this case
$x$ and $y$ have at most one common neighbour in $\N^+$. The alternative case is that $x$ and $y$ both belong to $X$.  Now if $x$ and $y$ both belong to $\{a,b\}$, then they have no common neighbour in $\N$, as $(a,b)$ is an O5-defect, so by construction they have $c$ as their only common neighbour in $\N^+$. But if one of them, say $x$, does not belong to  $\{a,b\}$, then it does not have $c$ as a neighbour, so does not have $c$ as a common neighbour with $y$. 
Since $x$ and $y$ have at most one common neighbour in $\N$, as $\N$ satisfies C2, it follows in this case that they continue to have at most one common neighbour in $\N^+$. That completes the proof that C2 holds in $\N^+$.
Finally, for C3 we have that $\F_0$ has no extra edges in $\N$ by C3 for $\N$, and $\N^+$ has only two additional edges both with a vertex $c$ not in $\F_0$, so $\F_0$ has no extra edges in $\N^+$.

We have now shown that  any defect in a coherent network is repaired in some coherent extension. By iterating this construction sufficiently we will eventually eliminate all defects. The first step is

\begin{lemma} \label{N*}
Every coherent network $\N$ has a coherent extension $\N^+$ such that every defect of $\N$ is repaired in $\N^+$.
\end{lemma}
\begin{proof}
Let $\ka$ be the cardinal of the set of all defects of $\N$ (which is a subset of $(X\times X')\cup (X\times X)$), and let $\<\d_\mu:\mu< \ka\>$ be an indexing of these defects by the ordinals less than $\ka$. We inductively construct a sequence
$\<\N^*_\mu:\mu< \ka\>$ of coherent extensions of $\N$ such that for all $\xi<\mu$, $\N^*_\mu$ extends 
$\N^*_\xi$.

Put $\N^*_0=\N$.  For $\mu<\ka$, assuming inductively that a coherent $\N^*_\mu$ has been defined, let $\N^*_{\mu+1}$ be a coherent network extending $\N^*_\mu$ in which $\d_\mu$ is no longer a defect, as exists by the above constructions. If $\mu$ is a limit ordinal, assuming inductively that the sequence $\<\N^*_\xi:\xi< \mu\>$ of coherent networks has been defined and forms a chain under the extension relation, let
$$\textstyle
\N^*_\mu=\bigcup\nolimits_{\xi<\mu}\N^*_\xi 
=(\bigcup\nolimits_{\xi<\mu}X^*_\xi,\bigcup\nolimits_{\xi<\mu}I^*_\xi,\bigcup\nolimits_{\xi<\mu}\th^*_\xi)
$$
be the union of the $\N^*_\xi$'s. It is readily checked that the union of an extension-chain of coherent networks is a coherent network extending each member of the chain. That completes the definition of the $\N^*_\mu$'s.

Now put $\N^+=\bigcup\nolimits_{\mu<\ka}\N^*_\mu$, a coherent network extending $\N^*_0=\N$. For each $\mu<\ka$, the defect  $\d_\mu$ is repaired in $\N^*_{\mu+1}$, and hence is repaired in  $\N^+$.
\end{proof}

Next we use this result to construct a countably infinite chain  $\<\N_n:n< \om\>$ that repairs all defects. $\N_0$ is the coherent network based on the frame $\F_0$, as defined earlier.  Assuming inductively that $\N_n$ has been defined as a coherent network, let $\N_{n+1}$ be the coherent extension $\N_n^+$ of $\N_n$ given by Lemma \ref{N*}. 

Now put
$\N_\om=\bigcup\nolimits_{n<\om}\N_n$. $\N_\om$ is coherent, and has no defects, since any purported defect of 
$\N_\om$ would  be a defect of  $\N_n$ for some $n<\om$, and hence be repaired in $\N_{n+1}$, therefore not a defect in $\N_\om$ after all.

Let $\N_\om=(\F_\om,\th_\om)$. Since $\N_\om$ has no O5-defects, $\F_\om$ satisfies O5. Since $\N_\om$ is coherent it satisfies C2 (distinct elements have at most one common neighbour) and hence $\F_\om$ satsifes O3. Thus $\F_\om$ is an elliptic 1-plane. By C3 for $\N_\om$, $\F_0$ is a full subgraph of $\F_\om$, so $\F_\om$ is non-degenerate. The map
$\th_\om:\F_\om\to\F'$ is a homomorphism by C1 for $\N_\om$. Since there are no B1-defects it is a bounded morphism.

To complete the proof of Theorem \ref{ellipticpreimage}, that $\F'$ is a bounded morphic image of a non-degenerate elliptic 1-plane,  it  remains only to show that $\th_\om$ is surjective. Let $c'$ be any element of  $X'$. Take any element $a$ of $\F_\om$. Then as $\F'$ is elliptic, $\th_\om(a)(I')^2c'$, so there exists $b'\in X'$ with
 $\th_\om(a)I'b'I'c'$. Then by B1 for $\th_\om$ there exists $b$ with $aI_\om b$ and $\th_\om(b)=b'$, so $\th_\om(b)I'c'$. By B1 again there then exists $c$ with $bI_\om c$ and $\th_\om(c)=c'$, which establishes that $\th_\om$ maps \emph{onto}
  $\F'$.
\end{proof}

\section {Some modal metatheory}

We review the background from propositional modal logic that will be needed. This  can be found in a number of texts,  such as \cite{blac:moda01,chag:moda97,gold:logi92}.

Modal formulas $\ph,\psi,\dots$ are constructed from a countably infinite set  $Var$ of propositional variables by the standard Boolean connectives  $\neg$, $\land$ and the unary modality $\bo$. The other Boolean connectives  $\lor$, $\to$, $\leftrightarrow$ are introduced as the usual abbreviations, and the dual modality $\di$ is defined to be $\neg\bo\neg$. 
Formulas $\bo^n\ph$ are defined by
induction on the natural number $n$ by putting $\bo^0\ph=\ph$ and $\bo^{n+1}\ph=\bo(\bo^n\ph)$.
Formulas $\di^n\ph$ are defined likewise by iterating $\di$. 

A \emph{model} $\M=(\F,V)$ for this language is given by a 1-frame $\F=(X,I)$ and a valuation function $V$ that  assigns to each variable $p\in Var$ a subset $V(p)$ of $X$ which may be thought of as the set of points at which $p$ is true. 
For each $a\in X$, let $I(a)=\{b\in X:aIb\}$ be the set of \emph{$I$-neighbours} of $a$.
The relation 
$\M,a\models\ph$ of a formula $\ph$ being \emph{true (or satisfied) at $a$ in $\M$} is defined by  induction on the formation of $\ph$
as follows:
\begin{itemize}
\item $\M,a\models p$ iff $x\in V(p)$, \enspace  for $p\in Var$.

\item $\M,a\models\neg\varphi$ iff $\M,a\not\models\varphi$ \enspace(\text{i.e.\ not }$\M,a\models\ph$).

\item $\M,a\models\varphi\wedge\psi$ iff $\M,a\models\varphi$ and $\M,a\models\psi$.

\item $\M,a\models\bo\varphi$ iff for every $b\in I(a)$, $\M,b\models\varphi$ .
\end{itemize}
Consequently 
\begin{itemize}
\item   \label{item:semdi}
$\M,a\models\di\ph$ iff for some $b\in I(a)$, $\M,b\models\varphi$.
\item
$\M,a\models\bo^n\ph$ iff for all $b$ such that $aI^nb$, $\M,b\models\ph$.
\item
$\M,a\models\di^n\ph$ iff for some $b$ such that $aI^nb$, $\M,b\models\ph$.
\end{itemize}
A formula $\ph$ is \emph{true in model $\M$} when it is true at all elements of $\M$,
and \emph{satisfiable in} $\M$ when it is true at some element of $\M$, i.e.\ $\neg\ph$ is not true in $\M$. A set $\Sig$ of formulas is \emph{satisfiable in} $\M$ when there is  some element of $\M$ at which all members of $\Sig$ are simultaneously true. $\Sig$ is \emph{satisfiable} in a 1-frame if it is satisfiable in some model on that frame.

A formula $\ph$ is  \emph{valid in $1$-frame $\F$} if it is true in all models on $\F$, or equivalently if $\neg\ph$ is not satisfiable in $\F$. $\ph$ is \emph{valid in a class} $\C$ of 1-frames if it is valid in all members of $\C$.

Given a bounded morphism $\th:\F'\to\F$ of 1-frames and a model $\M=(\F,V)$ on $\F$ we obtain a model $\M'=(\F',V')$ on $\F'$ by putting $V'(p)=\th^{-1}V(p)=\{a\in X':\th(a)\in V(p)\}$ for all  $p\in Var$. Then for any $a\in X'$, if $\ph$ is a variable,
\begin{equation}  \label{morphequiv}
\M',a\models\ph \enspace\text{iff}\enspace \M,\th(a)\models\ph.
\end{equation}
An induction on formation of formulas then shows that \eqref{morphequiv} holds for every $a$ and every formula $\ph$ \cite[Proposition 2.14]{blac:moda01}.

\begin{theorem} \label{bmorphsat}
If $\F$ is a bounded morphic image of $\F'$, and $\Sig$ is a set of formulas that is satisfiable in $\F$, then $\Sig$ is satisfiable in $\F'$.
\end{theorem}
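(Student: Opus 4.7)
The plan is to exploit the bounded morphism equivalence \eqref{morphequiv} already recorded just before the statement, together with the surjectivity built into the hypothesis that $\F$ is a bounded morphic \emph{image} of $\F'$.

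First I would fix a surjective bounded morphism $\th\colon\F'\to\F$ witnessing the hypothesis, and fix a witness to satisfiability of $\Sig$ in $\F$: a model $\M=(\F,V)$ on $\F$ and a point $a\in X$ such that $\M,a\models\ph$ for every $\ph\in\Sig$. The goal is then to produce a model on $\F'$ and a point of $\F'$ at which every member of $\Sig$ holds.

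Next I would pull back the valuation along $\th$, defining $V'(p)=\th^{-1}V(p)$ for each variable $p$ and setting $\M'=(\F',V')$. This is exactly the construction introduced in the discussion of \eqref{morphequiv}, so the bounded morphism lemma (the induction on formulas cited from \cite[Proposition 2.14]{blac:moda01}) gives that $\M',b\models\psi$ iff $\M,\th(b)\models\psi$ for every $b\in X'$ and every formula $\psi$.

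Finally, surjectivity of $\th$ provides an $a'\in X'$ with $\th(a')=a$, and applying the equivalence above to each $\ph\in\Sig$ at the point $a'$ yields $\M',a'\models\ph$ from $\M,a\models\ph$. Hence $\Sig$ is simultaneously satisfied at $a'$ in $\M'$, proving satisfiability of $\Sig$ in $\F'$. There is no real obstacle here: the proof is a one-line application of \eqref{morphequiv} plus surjectivity, and the only thing worth emphasising is that surjectivity of the bounded morphism (encoded in the phrase ``bounded morphic image'') is precisely what is needed to locate a preimage of the satisfying point $a$.
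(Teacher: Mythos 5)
Your proposal is correct and is essentially identical to the paper's own proof: both pull the valuation back along the surjective bounded morphism (as in the construction preceding \eqref{morphequiv}), invoke the equivalence \eqref{morphequiv}, and use surjectivity to find a preimage of the satisfying point. No differences worth noting.
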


\begin{proof}
Let $\th:\F'\to\F$ be a surjective bounded morphism and $\M$ a model on $\F$ such that $\Sig$ is a satisfiable at some element $b$ in $\M$. Take an element $a$ in $\F'$ such that $b=\th(a)$. Then from \eqref{morphequiv} we get that every member of $\Sig$ is true at $a$ in $\M'$.
\end{proof}
\noindent
Note that this result implies that if $\F$ is a bounded morphic image of $\F'$, then any formula $\ph$ that is valid in $\F'$ must be valid in $\F$, for if $\neg\ph$ were satisfiable in $\F$ it would be satisfiable in $\F'$.

An \emph{inner subframe} of a frame $\F=(X,I)$ is any frame $\F'=(X',I')$ for which $X'\sub X$, $I'$ is the restriction of $I$ to $X'$, and $X'$ is closed under $I$ in the sense that $I(a)\sub X'$ for all $a\in X'$. This means that $\F'$ is a substructure of $\F$ in which all the $I$-neighbours of an element of $\F'$ belong to $\F'$ as well. It is equivalent to requiring that the inclusion function $X'\hookrightarrow X$ is a bounded morphism from $\F'$ to $\F$. Thus from any model $\M$ on $\F$ we get a model $\M'$ on the inner subframe $\F'$ with $V'(p)=V(p)\cap X'$ such that by \eqref{morphequiv} with $\th$ as the inclusion, 
\begin{equation} \label{submodelsat}
\text{$\M',a\models\ph$ iff $\M,a\models\ph$ for every $a\in X'$ and every formula $\ph$.} 
\end{equation}
An important case of inner subframe is the notion of the  subframe $\F^a=(X^a,I^a)$ of $\F$ \emph{generated by an element $a$}. This has 
$$
X^a=\{b\in X:aI^nb \text{ for some }n\geq 0\},
$$
with $I^a$ being the restriction of $I$ to to $X^a$.
$\F^a$ is often called  \emph{point-generated}.
It is the smallest inner subframe of $\F$ that contains $a$ (when $n=0$). Note when $b\in X^a$ and $bI^nc$, then every member of the associated $I$-path of length $n$ from $b$ to $c$ is in $X^a$, hence $b(I^a)^nc$.
For any $b,c\in X^a$ we have  $aI^nb $ and $aI^m c$ for some $n,m$, hence 
$bI^n_+a I^m_+c$, showing that there is an $I^a_+$-path from $b$ to $c$ in $\F^a$. Thus 
\begin{equation} \label{ptconnected}
\text{any point-generated frame  is \emph{connected}}, 
\end{equation}
a fact we will make use of in the next section.

We turn now to matters of proof theory.
A \emph{normal logic} is any set $\LL$ of formulas that includes all instances of truth-functional tautologies and of the  scheme
\begin{enumerate}
\item[K:]
$\bo(\ph\to\psi)\to(\bo\ph\to\bo\psi)$,
\end{enumerate}
and is closed under the following rules:
\begin{itemize}
\item[]
\emph{modus ponens}: if $\ph,\ph\to\psi\in\LL$, then $\psi\in\LL$;
\item[]
$\bo$-\emph{generalisation}: if $\ph\in\LL$, then $\bo\ph\in\LL$.
\end{itemize}
The members of a logic $\LL$ are called its \emph{theorems}, or more specifically \emph{$\LL$-theorems}. A formula 
$\ph$ is \emph{$\LL$-consistent} when $\neg\ph$ is not an $\LL$-theorem. A set $\Sig$ of formulas is called \emph{$\LL$-consistent} if any finite conjunction $\ph_0\land\cdots\land\ph_{n-1}$ of members of $\Sig$ is $\LL$-consistent.
$\LL$ is \emph{sound for (validity in) a class $\C$} if every $\LL$-theorem is valid in $\C$.
$\LL$ is \emph{complete for (validity in)} $\C$ if, conversely, every formula that is valid in $\C$ is a $\LL$-theorem.
This is equivalent to requiring that every $\LL$-consistent formula is satisfiable in some member of $\C$.
$\LL$ is \emph{strongly complete} for $\C$ if every $\LL$-consistent set of formulas is satisfiable in some member of $\C$.

Any instance of the formula scheme K is valid in any 1-frame, and this validity is preserved by the modus ponens and $\bo$-generalisation rules. Thus for any $\F$, the set of all formulas valid in $\F$ is a normal logic. The smallest (intersection) of all normal logics, also known as K, is sound and strongly complete for validity in the class of all 1-frames. One way to prove that is to use the \emph{canonical frame} $\F_\LL=(X_\LL,I_\LL)$, which can be defined for any $\LL$ by taking $X_\LL$ to be the set of all maximally $\LL$-consistent sets of formulas, with $aI_\LL b$ when $\{\ph:\bo\ph\in a\}\sub b$. The \emph{canonical model} $\M_\LL=(\F_\LL,V_\LL)$ has $V_\LL(p)=\{a\in X_\LL:p\in a\}$ for each variable $p$. It has the property that  any formula $\ph$ is true at an element $a\in X_\LL$ iff $\ph$ belongs to the maximally $\LL$-consistent set $a$, i.e.
\begin{center}
$\M_\LL,a\models \ph$ \enspace iff \enspace $\ph\in a$.
\end{center}
This is sometimes called the Truth Lemma for $\LL$: for details see \cite[Chapter 4]{blac:moda01} or \cite[Chapter 3]{gold:logi92}. Since a formula $\ph$ is a $\LL$-theorem iff it belongs to every member of $X_\LL$, the Truth Lemma implies that the formulas that are true in the model $\M_\LL$ are precisely the $\LL$-theorems. Consequently, any formula that is valid in the frame $\F_\LL$ must be a $\LL$-theorem. Equivalently, every $\LL$-consistent formula is satisfiable in $\M_\LL$, hence satisfiable in $\F_\LL$.
More strongly, any $\LL$-consistent set $\Sig$ of formulas can be extended to a maximally $\LL$-consistent set $a_\Sig$, and so by the Truth Lemma  $\Sig$ is satisfied at $a_\Sig$ in $\M_\LL$, so is satisfiable in $\F_\LL$.

Now letting $\LL$=K, we get the strong completeness of the logic K: every K-consistent set of formulas  is satisfiable in some 1-frame, namely in the model $\M_{\rm K}$ on the frame $\F_{\rm K}$.

\section{Strong completeness of 12g and 8f}   \label{sec:strong}

The canonical frame construction provides a method for showing that a logic $\LL$ is strongly complete for a class $\C$.  Since every $\LL$-consistent set of formulas is satisfiable in $\F_\LL$, it is enough to show that $\F_\LL$ belongs to $\C$. If $\C$ is defined by specified properties, then the  strong completeness proof is reduced to showing that $\F_\LL$ has those properties. There are numerous logics for which the axioms defining $\LL$ can be directly applied to show that  $\F_\LL$ has the properties in question.

The first very general result of this kind appeared in \cite[Section 4]{lemm:intr77}, which defined the axiom scheme
\begin{enumerate}
\item[G$'$:] 
$\di^m\bo^n\ph\to\bo^p\di^q\ph.$
\end{enumerate}
The parameters $m,n,p,q$ are a fixed but arbitrary quadruple of natural numbers. Corresponding to G$'$ is the frame condition
\begin{enumerate}
\item[(g$'$):] 
for all $a,b,c$ such that  $aI^mb$ and $aI^pc$, there exists $d$ with $bI^nd$ and $cI^qd$.
\end{enumerate}
It is proved in Theorems 4.1 and 4.2 of \cite{lemm:intr77} that       
\begin{itemize}
\item 
G$'$ is valid in any 1-frame satisfying (g$'$). 
\item
If a normal logic $\LL$ includes G$'$, then $\F_\LL$ satisfies (g$'$). 
\end{itemize}
The converse of the first item is also true, so in fact a 1-frame validates G$'$ if, and only if, it satisfies (g$'$).

Many interesting cases can be read off from this analysis. Table \ref{table1}  gives the four that we use. The left column gives particular values of the parameters of G$'$, the middle column gives the resulting instance of G$'$ with a name, and the third column  gives the corresponding condition on $I$ expressed by (g$'$). 
\begin{table}[h]
\begin{center}
\begin{tabular}{l|l|l}
$m,n,p,q$ & \quad G$'$ & \quad g$'$  \\  
\hline 
 \\[-2ex]
0,1,0,1 &   D:\enspace $\bo\ph\to\di\ph$  & $I$ is serial  \\  [.5ex]
1,1,0,0 &   B:\enspace $\di\bo\ph\to\ph$         &  $I$ is symmetric \\  [.5ex]
0,2,4,0 &  $4^2$:\enspace  $\bo^2\ph\to\bo^4\ph$  \quad &$aI^4b$ implies $aI^2b$, i.e.\ O1 \\  [.5ex]
0,3,0,0 &  T$^3$:\enspace  $\bo^3\ph\to\ph$   &$aI^3a$, i.e.\ $I^3$ is reflexive
\end{tabular}
\end{center}
\caption{Cases of G$'$}
\label{table1}
\end{table}%

The names D and B are standard. The name $4^2$ is by analogy with the standard name 4 for the axiom $\bo\ph\to\bo^2\ph$ which corresponds to $I$ being transitive. $4^2$ corresponds to $I^2$ being transitive. T$^3$ is by analogy with the name T which corresponds to $I$ being reflexive.

12g can now be defined as the smallest normal logic that includes the axiom schemes D, B and 4$^2$. By the quoted results from  \cite{lemm:intr77}, a 1-frame validates these three axioms iff it is serial, symmetric and satisfies O1, i.e.\ iff it is a quasi-1-plane. The set of all formulas valid in a given quasi-1-plane is a normal logic including D, B and 4$^2$, so it includes 12g as the smallest such logic. Thus 12g is sound for validity in the class of all quasi-1-planes. Moreover the canonical frame $\F_{\rm 12g}$ for 12g is a quasi-1-plane, so 12g is strongly complete for quasi-1-planes. This can be combined with our Corollary \ref{quasipreimage} to yield the stronger

\begin{theorem} \label{strongcomp12g}
$12$g is strongly complete for  the class of all non-degenerate projective $1$-planes.
\end{theorem}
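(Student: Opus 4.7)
The plan is to combine the canonical model method for 12g with the preimage construction in Corollary \ref{quasipreimage}, using the point-generated subframe to bridge between them. Let $\Sigma$ be an arbitrary 12g-consistent set of formulas; our goal is to satisfy $\Sigma$ in a model on some $\G_+$ where $\G$ is a projective 2-plane.

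First I would appeal to the canonical model apparatus reviewed in the previous section. Since 12g is by definition the smallest normal logic containing D, B, and $4^2$, and since Table \ref{table1} identifies these as instances of the G$'$ scheme whose canonical frame conditions are seriality, symmetry, and O1 respectively, the general Lemmon--Scott result cited from \cite{lemm:intr77} shows that the canonical frame $\F_{\mathrm{12g}}$ is serial, symmetric, and satisfies O1. Thus $\F_{\mathrm{12g}}$ is a quasi-1-plane. By the Truth Lemma, $\Sigma$ can be extended to a maximally 12g-consistent set $a$ at which every member of $\Sigma$ is true in $\M_{\mathrm{12g}}$.

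Next I would cut down to the connected case, which is what Corollary \ref{quasipreimage} requires. Pass to the point-generated subframe $\F_{\mathrm{12g}}^{a}$ and the associated submodel. By \eqref{ptconnected} this subframe is connected, and by \eqref{submodelsat} the set $\Sigma$ remains satisfied at $a$ in the submodel. I would then verify, briefly, that the three quasi-1-plane conditions survive the passage to an inner subframe: seriality is preserved because the $I$-witness of any point in $X^{a}$ lies in $X^{a}$; symmetry is immediate from $I^{a}\subseteq I$; and O1 transfers because any length-4 $I^{a}$-path gives a length-4 $I$-path in $\F_{\mathrm{12g}}$, so O1 yields an intermediate point, which is automatically in $X^{a}$ because the subframe is inner. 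Hence $\F_{\mathrm{12g}}^{a}$ is a connected quasi-1-plane in which $\Sigma$ is satisfiable.

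Finally I would apply Corollary \ref{quasipreimage} to produce a projective 2-plane $\G$ together with a surjective bounded morphism $\G_+ \to \F_{\mathrm{12g}}^{a}$ of 1-frames, where $\G_+$ is a non-degenerate projective 1-plane. Theorem \ref{bmorphsat} then lifts the satisfiability of $\Sigma$ from $\F_{\mathrm{12g}}^{a}$ up to $\G_+$, completing the argument. I do not anticipate a serious obstacle, since all the heavy lifting has been done earlier: the canonical-frame analysis handles soundness-cum-completeness for quasi-1-planes, and Corollary \ref{quasipreimage} together with Theorem \ref{bmorphsat} immediately promotes this to non-degenerate projective 1-planes. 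The only point requiring care is the routine verification that point-generation preserves the three defining properties of a quasi-1-plane, which is needed to make the connectedness hypothesis of Corollary \ref{quasipreimage} available.
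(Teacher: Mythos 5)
Your proposal is correct and follows essentially the same route as the paper's own proof: satisfy $\Sigma$ in the canonical model, pass to the point-generated (hence connected) inner subframe, note that seriality, symmetry and O1 are preserved so it is a connected quasi-1-plane, then apply Corollary \ref{quasipreimage} and Theorem \ref{bmorphsat} to transfer satisfiability to a non-degenerate projective 1-plane. The only difference is that you spell out the routine preservation checks that the paper states without detail, and your verifications are accurate.
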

\begin{proof}
Let $\Sig$ be a 12g-consistent set of formulas. Then $\Sig$ is satisfied in the canonical model  $\M_{\rm 12g}$ at some element $a$.  Let $\F$ be the inner subframe of $\F_{\rm 12g}$ generated by $a$. The properties of being serial, symmetric and satisfying O1 are all preserved in passing from $\F_{\rm 12g}$ to $\F$, so $\F$ is  a quasi-1-plane. Moreover, being point-generated it is connected by \eqref{ptconnected}. Therefore by Corollary \ref{quasipreimage} there exists a non-degenerate projective 1-plane $\F^*$ with a surjective bounded morphism $\th:\F^*\to\F$. 

Now the model $\M_{\rm 12g}$ on $\F_{\rm 12g}$ restricts to a model $\M$ on $\F$ in which $\Sig$ is satisfied at $a$
(see \eqref{submodelsat}). So $\Sig$ is satisfiable in $\F$. Hence by Theorem \ref{bmorphsat}, $\Sig$ is satisfiable in $\F^*$ (at some point $a'$ with $\th(a')=a$).

This shows that every 12g-consistent set of formulas is satisfiable in some non-degenerate projective 1-plane, as required.
\end{proof}

This result is a manifestation of the fact that the modal language does not have the expressive power to differentiate the non-degenerate projective 1-planes from other 1-planes, including the elliptic ones, or even from the other quasi-planes:

\begin{theorem}
For any formula $\ph$, the following are equivalent.
\begin{enumerate}[\rm(1)]
\item 
$\ph$ is a theorem of $12$g.
\item 
$\ph$ is valid in all quasi-$1$-planes.
\item 
$\ph$ is valid in all connected quasi-$1$-planes.
\item 
$\ph$ is valid in all\/ $1$-planes.
\item 
$\ph$ is valid in all projective-$1$-planes.
\item 
$\ph$ is valid in all non-degenerate projective-$1$-planes.
\end{enumerate}
\end{theorem}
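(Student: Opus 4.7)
The plan is to close a cycle of implications $(1)\Rightarrow(2)\Rightarrow(3)\Rightarrow(4)\Rightarrow(5)\Rightarrow(6)\Rightarrow(1)$. Five of the six links are either soundness of 12g or a direct inclusion of frame classes, and the only substantive link is $(6)\Rightarrow(1)$, which is supplied by the strong completeness Theorem \ref{strongcomp12g}.

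For $(1)\Rightarrow(2)$ I appeal to the soundness of 12g over quasi-1-planes, already recorded in the paragraph immediately preceding Theorem \ref{strongcomp12g}: by the Lemmon-scheme correspondence, the axioms D, B and $4^2$ force seriality, symmetry and O1, so every quasi-1-plane validates every 12g-theorem. $(2)\Rightarrow(3)$ is trivial. For $(3)\Rightarrow(4)$ I use the fact, noted in Section \ref{sec:1planes}, that any 1-plane is serial and symmetric and (by definition) satisfies O1, hence is a quasi-1-plane; because it also satisfies O2, Theorem \ref{quasi1props}(3) makes it connected. The inclusions $(4)\Rightarrow(5)$ and $(5)\Rightarrow(6)$ are immediate from the definitions of a projective 1-plane and of a non-degenerate projective 1-plane, each being in particular a 1-plane.

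The only real work is the implication $(6)\Rightarrow(1)$, which I prove by contraposition. Suppose $\ph$ is not a theorem of 12g. Then $\{\neg\ph\}$ is a 12g-consistent set of formulas, so Theorem \ref{strongcomp12g} yields a non-degenerate projective 1-plane $\F^*$ together with a model on $\F^*$ in which $\neg\ph$ is satisfied at some point. Consequently $\ph$ is not valid in $\F^*$, contradicting (6). The substantive content therefore lies entirely in Theorem \ref{strongcomp12g}, and hence upstream in Corollary \ref{quasipreimage} and Venema's construction; no new obstacle is introduced by the present theorem.
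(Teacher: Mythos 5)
Your proposal is correct and follows essentially the same route as the paper: soundness for $(1)\Rightarrow(2)$, class inclusions for the middle links (with the same observation that every $1$-plane is a connected quasi-$1$-plane, via seriality, symmetry, O1 and O2), and Theorem \ref{strongcomp12g} for $(6)\Rightarrow(1)$. The extra detail you give for $(3)\Rightarrow(4)$ and the contrapositive phrasing of $(6)\Rightarrow(1)$ are just unpackings of what the paper states more tersely.
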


\begin{proof}
That 1 implies 2 is the soundness of 12g shown above. That 2 implies 3, 3 implies 4, 4 implies 5 and 5 implies 6 are all immediate, since the antecedent class includes the consequent one. That 6 implies 1 follows from the completeness of 12g for non-degenerate projective-$1$-planes given by Theorem \ref{strongcomp12g}.
\end{proof}

This result tell us that if a formula is valid in all projective $1$-planes (5), then it is valid in all $1$-planes (4), and in particular is valid in all the elliptic ones. On the other hand there are formulas that are valid in all elliptic $1$-planes but not in all projective ones. Note first that the elliptic condition O5 is not itself equivalent to the validity of any modal formula. This is readily seen from the fact that the class of 1-frames validating a modal formula is closed under disjoint unions \cite[Theorem 3.14]{blac:moda01}, but the disjoint union of two frames satisfying O5 will not satisfy O5. Nonetheless the modal language can differentiate elliptic structures. For 1-planes a suitable condition is
that $aI^3b$ implies $aI^2b$, which is equivalent to the validity of the scheme 
$\bo^2\ph\to\bo^3\ph$. In the presence of O2 this yields O5, hence ensures there is one $I^2$-equivalence class
\cite[p.17]{pled:some81}. But there is a slightly simpler condition that does not depend on O2:

\begin{lemma} \label{I3refl}
A connected quasi-$1$-plane $\F=(X,I)$  is elliptic iff $I^3$ is reflexive iff there exists an element $a$ with $aI^3a$.
\end{lemma}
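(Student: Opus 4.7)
The proof should essentially be a direct application of Theorem \ref{quasi1props}(4), which tells us that in a connected quasi-1-plane, for any element $a$, the sets $I^2(a)$ and $I^3(a)$ exhaust all $I^2$-equivalence classes. So $\F$ is elliptic precisely when $I^2(a)=I^3(a)$ for some (equivalently, every) $a$, and the three conditions in the lemma just encode this coincidence in slightly different ways.

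The plan is to prove the implications in a cycle. First I would show that elliptic implies $I^3$ is reflexive. Fix any $a\in X$ (nonempty by seriality). By Theorem \ref{quasi1props}(1), $aI^2a$, so $a\in I^2(a)$. If $\F$ is elliptic, then by Theorem \ref{quasi1props}(4) applied to $a$, the two classes $I^2(a)$ and $I^3(a)$ must coincide (since there is only one class). Hence $a\in I^3(a)$, i.e.\ $aI^3a$; as $a$ was arbitrary, $I^3$ is reflexive. The implication from ``$I^3$ is reflexive'' to ``there exists an element $a$ with $aI^3a$'' is immediate, since $X\ne\emptyset$.

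For the remaining implication, suppose $aI^3a$ for some $a$. Then $a\in I^3(a)$, and also $a\in I^2(a)$ by Theorem \ref{quasi1props}(1). Since $I^2$ is an equivalence relation, any two of its equivalence classes containing a common element coincide, so $I^2(a)=I^3(a)$. By Theorem \ref{quasi1props}(4) these are the only $I^2$-equivalence classes of $\F$, so there is only one such class, meaning $\F$ satisfies O5 and is therefore elliptic.

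There is no real obstacle here: the content is packaged inside Theorem \ref{quasi1props}(4), and the present lemma is a convenient repackaging that isolates the single reflexive instance $aI^3a$ as the decisive criterion. The only thing to be a little careful about is invoking Theorem \ref{quasi1props}(4), which requires connectedness; this is hypothesised in the lemma, so the argument goes through directly.
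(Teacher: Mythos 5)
Your proposal is correct and follows essentially the same route as the paper: both arguments rest on Theorem \ref{quasi1props}(4) (the classes $I^2(a)$ and $I^3(a)$ are the only $I^2$-equivalence classes), use $aI^2a$ from part (1), and note that a single $a$ with $aI^3a$ forces the two classes to coincide. No gaps worth flagging.
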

\begin{proof}
By Theorem \ref{quasi1props}(4), for any $a\in X$ the subsets $I^2(a)$ and $I^3(a)$ are the only $I^2$-equivalence classes. If $\F$ is elliptic then for any $a$,  $I^2(a)=I^3(a)$, so $aI^3a$ because $aI^2a$. But for the converse it only takes one $a$ to have  $aI^3a$ to get $a\in I^2(a)\cap I^3(a)$ and hence the equivalence classes are equal.
\end{proof}

The logic 8f can be defined as the smallest extension of 12g that includes the axiom scheme T$^3$.  That scheme corresponds to reflexivity of $I^3$ (Table \ref{table1}). Thus 8f is sound for validity in the class of all elliptic quasi-1-planes
\cite[pp.33, 40]{pled:some81}.

\begin{theorem} \label{strongcomp8f}
$8$f is strongly complete for  the class of all non-degenerate elliptic $1$-planes.
\end{theorem}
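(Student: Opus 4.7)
The proof will follow the same template as Theorem~\ref{strongcomp12g}, substituting Theorem~\ref{ellipticpreimage} for Corollary~\ref{quasipreimage}. My plan is to start with an 8f-consistent set $\Sig$ of formulas, use the canonical model for 8f to satisfy $\Sig$ at some point, pass to the point-generated inner subframe at that point to obtain a \emph{connected} quasi-1-plane in which $I^3$ is reflexive, and then invoke Lemma~\ref{I3refl} and Theorem~\ref{ellipticpreimage} to transport satisfiability to a non-degenerate elliptic 1-plane.

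In more detail: let $\Sig$ be 8f-consistent. Extend it to a maximally 8f-consistent $a_\Sig$, so by the Truth Lemma $\Sig$ is satisfied at $a_\Sig$ in $\M_{\rm 8f}$. Since 8f contains the axioms D, B and $4^2$ of 12g, the results from \cite{lemm:intr77} quoted in the previous section ensure that $\F_{\rm 8f}$ is serial, symmetric and satisfies O1, i.e.\ is a quasi-1-plane. Since 8f also contains $\mathrm{T}^3$, the same results give that $I_{\rm 8f}^3$ is reflexive on $X_{\rm 8f}$.

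Now let $\F=\F_{\rm 8f}^{a_\Sig}$ be the inner subframe generated by $a_\Sig$. Seriality, symmetry and O1 are inherited by inner subframes, so $\F$ is a quasi-1-plane; reflexivity of $I^3$ is also inherited, because for any $b\in X^{a_\Sig}$ an $I$-path witnessing $bI^3b$ in $\F_{\rm 8f}$ must lie entirely within $X^{a_\Sig}$, which is closed under $I$-neighbours. Moreover $\F$ is connected by \eqref{ptconnected}. Hence by Lemma~\ref{I3refl}, $\F$ is an elliptic quasi-1-plane. Applying Theorem~\ref{ellipticpreimage} yields a non-degenerate elliptic 1-plane $\F^*$ together with a surjective bounded morphism $\th:\F^*\to\F$.

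Finally, by \eqref{submodelsat} the restriction of $\M_{\rm 8f}$ to $\F$ is a model in which $\Sig$ is satisfied at $a_\Sig$, so $\Sig$ is satisfiable in $\F$; then by Theorem~\ref{bmorphsat} $\Sig$ is satisfiable in $\F^*$. Since $\F^*$ is a non-degenerate elliptic 1-plane, this completes the proof. There is no real obstacle here beyond verifying that the relevant frame conditions transfer to the point-generated inner subframe (which is routine since O1, seriality, symmetry and reflexivity of $I^3$ are all preserved by the inclusion of an inner subframe); all the substantive work has already been done in Theorem~\ref{ellipticpreimage}.
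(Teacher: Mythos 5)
Your proposal is correct and follows essentially the same route as the paper's own proof: satisfy $\Sig$ in the canonical model for 8f, pass to the point-generated inner subframe (noting that seriality, symmetry, O1 and reflexivity of $I^3$ are inherited, and that the subframe is connected), apply Lemma~\ref{I3refl} to conclude it is an elliptic quasi-1-plane, and then transfer satisfiability via Theorem~\ref{ellipticpreimage} and Theorem~\ref{bmorphsat}. No gaps.
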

\begin{proof}
Let $\Sig$ be an 8f-consistent set of formulas. Then by the argument of the proof of Theorem \ref{strongcomp12g},
$\Sig$ is satisfied at some element $a$ in the inner subframe $\F$ of the canonical frame $\F_{\rm 8f}$ generated by $a$, and $\F$ is a connected quasi-1-plane. But now by \cite[4.2]{lemm:intr77},  since T$^3$ is included in 8f,  $\F_{\rm 8f}$ has $I^3$ reflexive. That property is preserved in passing to $\F$, since if $bI_{\rm 8f}cI_{\rm 8f}dI_{\rm 8f}b$ and $b$ is in $\F$, then $c$ and $d$ are in $\F$ with $bIcIdIb$.
Thus $\F$ is an elliptic quasi-$1$-plane by Lemma \ref{I3refl}.
Hence by Theorem \ref{ellipticpreimage}, there exists a non-degenerate elliptic 1-plane $\F^*$ with a surjective bounded morphism $\th:\F^*\to\F$.  So by Theorem \ref{bmorphsat}, $\Sig$ is satisfiable in $\F^*$.
\end{proof}

Another property of a relation $I$ that it is natural to consider is \emph{irreflexiveness}, which is well known not to be equivalent to the validity of any modal formula (e.g.\ it is not preserved by surjective bounded morphisms). For a 2-frame $\F$, irreflexiveness is built in since $I\sub P\times L$ and $P\cap L=\emptyset$, so $aIa$ never occurs. Hence the associated symmetric 1-frame $\F_+$ is irreflexive, and indeed never has $a(I_+)^n a$ if $n$ is odd. Thus in applying Corollary \ref{quasipreimage} to the proof of Theorem \ref{strongcomp12g}, what is produced is an irreflexive 1-plane satisfying a 12g-consistent set. So 12g is strongly complete for the class of irreflexive non-degenerate elliptic $1$-planes.

In the real elliptic plane, the conjugacy relation is irreflexive, since no point lies on its polar line, so $aIa$ cannot hold. Irreflexivity of $I$ is one of the axioms for elliptic plane geometry of Kordos \cite{kord:elli73}.
Now in the proof of Theorem \ref{ellipticpreimage}, the graph $\F_0$ is irreflexive and the operations for repairing defects do not introduce any reflexive points, so the non-degenerate elliptic 1-plane constructed in that theorem is irreflexive. It follows by the proof of Theorem \ref{strongcomp8f}, that $8$f is strongly complete for  the class of all irreflexive non-degenerate elliptic $1$-planes.

\section{Finite model property}  \label{sec:fmp}

A modal logic $\LL$ has the \emph{finite model property} when every $\LL$-consistent formula is satisfiable in some model on a \emph{finite} frame that validates $\LL$. Equivalently this means that $\LL$ is complete for validity in the class of all finite frames that validate $\LL$  (hence sound and complete for this class).

We will show that the logics 12g and 8f have the finite model property. Since they have finitely many axioms, it follows by standard theory that the property of being a theorem of the logic is algorithmically decidable for each of them. For an explanation of this theory see \cite[\S6.2]{blac:moda01} or
 \cite[\S16.2]{chag:moda97}.

We use the well known \emph{filtration} method of reducing a model to a finite one. Let $\LL$ be a normal logic extending 12g. Fix a formula $\ph$ that is $\LL$-consistent. Then by the construction in the first part of the proof of Theorem \ref{strongcomp12g}, $\ph$ is true at some element of a model $\M$ on a connected quasi-1-plane $\F=(X,I)$ which is a point-generated subframe of the canonical frame $\F_\LL$.
Let $\Phi$ be the finite set of all subformulas of $\ph$. For each $a\in X$, let 
$ \Phi_a=\{\psi\in\Phi: \M,a\models\psi\}$. 
Define an equivalence relation $\sim$ on $X$ by putting
$a\sim b$ iff $\Phi_a=\Phi_b$.
Then with $\ab{a}=\{b\in X:a\sim b\}$ being the $\sim$-equivalence class of $a$, we put  $X_\Phi=\{\ab{a}:a\in X\}.$
The set $X_\Phi$ is finite, because the map $\ab{a}\mapsto \Phi_a$ is a well-defined injection of $X_\Phi$ into the powerset of $\Phi$. Thus $X_\Phi$ has size at most $2^n$, where $n$ is the size of $\Phi$.

 A \emph{filtration of $\M$ through $\Phi$} is any model $\M'=(\F',V')$ with $\F'$ of the form $(X_\Phi,I')$ such that 
 \begin{enumerate}[(i)]
\item 
$aIb$ implies $\ab{a}I'\ab{b}$, i.e.\ $a\mapsto\ab{a}$ is a homomorphism $\F\to\F'$.
\item
if $\ab{a}I'\ab{b}$, then for all $\bo\psi\in\Phi$, $\M,a\models\bo\psi$ implies  $\M,b\models\psi$.
\item
$V'(p)=\{\ab{a}\in X_\Phi: a\in V(p)\}$ for all variables $p\in\Phi$.
\end{enumerate}
For such an $\M'$, we have the \emph{Filtration Theorem}: for any $\psi\in\Phi$ and $a$ in $\F$,
\begin{equation*}
\text{$\M,a\models\psi$\enspace  iff\enspace  $\M',\ab{a}\models\psi$.}
\end{equation*}
This is shown by  induction on the formation of the formula $\psi\in\Phi$: see \cite[Theorem.~2.39]{blac:moda01} or \cite[Theorem.~5.23]{chag:moda97}.

Filtrations of $\M$ through $\Phi$ do exist. The \emph{least} filtration has $V'(p)$ defined by (iii) if $p\in\Phi$ and
$V'(p)=\emptyset$ (or anything) otherwise; while $I'$ is defined for $\a,\b\in X_\Phi$ by putting
\begin{equation}\label{defnI}
\text{$\a I'\b$ iff there exist $a\in\a$ and $b\in\b$ such that $aIb$.}
\end{equation}
This definition makes $I'$ symmetric whenever $I$ is, and serial whenever $I$ is.

We are now in a position to demonstrate that 8f has the finite model property.
Put $\LL$=8g in the above and let $\M'$ be the least filtration of $\M$ through $\Phi$. Now the underlying frame $\F$ of $\M$ is a subframe of $\F_{\rm 8f}$ that is an elliptic quasi-1-plane by the argument in the proof of Theorem \ref{strongcomp8f}, so $I$ is symmetric and satisfies O5. Hence $I'$ is symmetric, as just noted. It also satisfies O5, for if $\ab{a},\ab{b}\in X_\Phi$, then $aIcIb$ for some $c$ as $\F$ satisfies O5, hence $\ab{a}I'\ab{c}I'\ab{b}$ by the filtration homomorphism property (i). Thus $\F'$ is an elliptic quasi-1-plane and therefore validates 8f. But $\ph$ is true at some point $a$ of $\M$, and $\ph\in\Phi$, so $\ph$ is true at $\ab{a}$ in $\M'$ by the Filtration Theorem. This proves that any 8f-consistent formula is satisfiable in a model on a finite frame validating 8f, as required.

In the case that $\LL$=12g, we  only know so far that the subframe $\F$ of $\F_{\rm 12g}$ is a connected quasi-1-plane that may be projective or elliptic. If it is elliptic, we can proceed exactly as in the preceding paragraph to obtain a satisfying model for $\ph$ on a finite elliptic quasi-1-plane. If however $\F$ is projective, we proceed to construct a suitable 
$\M'$ in a different way, with new definitions of $X_\Phi$ and $I'$ as follows.  $\F$ has two $I^2$-equivalence classes, which we will label $P$ and $L$. We filtrate $P$ and $L$ separately through $\Phi$. For each $a\in P$, let $\ab{a}=\{ b\in P:a\sim b\}$, while if $a\in L$ let $\ab{a}=\{ b\in L:a\sim b\}$. Put $P'=\{\ab{a}:a\in P\}$, $L'=\{\ab{a}:a\in L\}$, and $X_\Phi=P'\cup L'$. Since $P$ and $L$ are disjoint, so too are $P'$ and $L'$, which  each have at most $2^n$ elements. Hence $X_\Phi$ is of size at most $2^{n+1}$. Then $I'$ is defined for $\a,\b\in X_\Phi$ by the condition  \eqref{defnI}. To complete the definition of $\M'$ we define $V'(p)$ by (iii) if $p\in\Phi$, and put $V'(p)=\emptyset$ otherwise, as before.

$\M'$ is a filtration of $\M$ through $\Phi$: (iii) holds by definition of $V'$, and  reading  \eqref{defnI} from right to left shows that $aIb$ implies  $\ab{a}I'\ab{b}$, i.e.\ (i) holds. For (ii), suppose  $\ab{a}I'\ab{b}$. Then $a'I'b'$ for some $a'\in\ab{a}$ and $b'\in\ab{b}$, so if $\bo\psi\in\Phi$ and  $\M,a\models\bo\psi$, then $\M,a'\models\bo\psi$ as $a\sim a'$, hence $\M,b'\models\psi$ as $a'Ib'$, and  $\M,b\models\psi$ as $b\sim b'$, which proves (ii). It follows that the Filtration Theorem holds for $\M'$, leading us to conclude that the 12g-consistent formula $\ph$ is satisfiable in the finite model $\M'$.

It remains to show that $\F'=(X_\Phi,I')$ validates 12g. As noted above,  \eqref{defnI} ensures that the properties of being serial and symmetric are preserved in passing from $I$ to $I'$. We show that $I'$ satisfies O1.
First, by parts (4) and (5) of Theorem \ref{quasi1props}, whenever $aIb$ then one of $a$ and $b$ belongs to $P$ and the other belongs to $L$. Hence by  \eqref{defnI},  whenever $\a I'\b$ then one of $\a$ and $\b$ belongs to $P'$ and the other belongs to $L'$. Iterated application of this gives that if $\a(I')^n\b$ and $n$ is even, then $\a$ and $\b$ both belong to $P'$, or both belong to $L'$.  We can now prove O1:  suppose $\a(I')^4\b$. If $\a\in P'$, then $\b\in P'$, so choosing $a\in\a$ and $b\in\b$ we have $a,b\in P$, hence $aI^2b$ as $P$ is an $I^2$-equivalence class. But then $\ab{a}(I')^2\ab{b}$ as in the 8f case, i.e.\  $\a(I')^2\b$. But if $\a\in L'$, then $\b\in L'$ and similarly we infer that $\a(I')^2\b$.

This proves that $\F$ satisfies O1 and is a quasi-1-frame, hence validates 12g. (In fact it is projective, with $P'$ and $L'$ being its only two $(I')^2$-equivalence classes.) That completes the proof of the finite model property for 12g.

We end with a comment on irreflexiveness and the logic  8f. It was shown
at the end of Section \ref{sec:strong} that 8f is  (strongly) complete for the class of all \emph{irreflexive} non-degenerate elliptic $1$-planes.  However it does not have the finite model property for this class.  Pledger noted in  \cite[p.24]{pled:some81} that the class has \emph{no} finite members at all, by the Friendship Theorem of graph theory. That theorem, due to Erd\H os, R\'enyi and S\'os, can be formulated as saying that a finite irreflexive symmetric 1-frame in which any two distinct elements have exactly one common neighbour must be a \emph{windmill} graph,   a set of triangles that all have one common vertex, while no two triangles share any other vertex (see \cite[Chapter 44]{aign:proo18} for an exposition).  Thus any finite irreflexive elliptic 1-plane is a windmill. But no windmill can satisfy the non-degeneracy condition O4$'$, as the reader may like to confirm.



\bigskip\newpage
\noindent {\bf{Acknowledgement}}

\medskip\noindent
My thanks to Shirley Pledger for  making  \cite{pled:some81} available and  providing  biographical information.



\bibliographystyle{plain}

\end{document}